\def\version{}
\newcommand{\notyet}[1]{}
\DeclareSymbolFont{AMSb}{U}{msb}{m}{n}
\DeclareSymbolFontAlphabet{\mathbb}{AMSb}
\newcommand\pa{\partial}
\newcommand\ov{\overline}
\def\Re{{\rm Re\, }}
\def\Im{{\rm Im\,}}
\providecommand\C{{\mathbb C}}
\renewcommand\C{{\mathbb C}}
\newcommand{\R}{{\mathbb R}}
\newcommand{\abs}[1]{\vert #1 \vert}
\newcommand{\norm}[1]{\Vert #1 \Vert}
\newcommand{\const}{{\rm const}}
\providecommand{\ltor}[1]{
\ifnum #1=1{\it i}\else\ifnum #1=2{\it ii}\else\ifnum #1=3{\it iii}
\else\ifnum #1=4 {\it iv}\fi\fi\fi\fi
}
\DeclareMathSymbol{\varPhi}{\mathord}{letters}{"08}
\DeclareMathSymbol{\varOmega}{\mathord}{letters}{"0A}
\font\thf cmssdc10 at 11pt
\theoremstyle{plain}
\newtheorem{theorem}{\thf Theorem}[section]
\newtheorem{lemma}[theorem]{\thf Lemma}
\newtheorem{corollary}[theorem]{\thf Corollary}
\newtheorem{proposition}[theorem]{\thf Proposition}
\theoremstyle{definition}
\newtheorem{definition}[theorem]{Definition}
\newtheorem{remark}[theorem]{\thf Remark}
\makeatletter\@addtoreset{equation}{section}
\begin{document}

\title{On global well-posedness for 
Klein-Gordon equation with concentrated nonlinearity
}

\author{
{\sc Elena Kopylova}
\footnote{Research supported by the Austrian Science Fund (FWF) under Grant No. P27492-N25 
and RFBR grant No. 16-01-00100a}
\\ 
{\it\small Faculty of Mathematics of Vienna University and IITP RAS
}}

\date{\version}

\maketitle

\begin{abstract}
We prove global well-posedness for the 3D Klein-Gordon equation with a concentrated nonlinearity.
\end{abstract}

\section{Introduction}
\label{int-results}
The paper concerns  a nonlinear interaction of the Klein-Gordon field with 
point oscillators. The system is governed by  the following equations
\begin{equation}\label{iKG}
\left\{\begin{array}{c}
\ddot \psi(x,t)=(\Delta-m^2)\psi(x,t)+\sum\limits_{1\le j\le n}\zeta_j(t)\delta(x-y_j)\\\\
\lim\limits_{x\to y_j}(\psi(x,t)-\zeta_j(t)g_j(x))=F_j(\zeta(t)),\quad 1\le j\le n
\end{array}\right|\quad y_j\in\R^3,\quad x\in\R^3,\quad t\in\R,
\end{equation} 
with $m>0$ and $\zeta(t)=(\zeta_1(t),...,\zeta_n(t))\in\C^n$. 
Here  $g_{j}(x)=g(x-y_j)$, and $g(x)$ is the Green's function of the operator $-\Delta+m^2$ in $\R^3$, i.e.,
\begin{equation}\label{Green}
  g(x)=\frac{e^{-m|x|}}{4\pi|x|}.
\end{equation} 
The nonlinearity $F(\zeta)=(F_1(\zeta),...,F_n(\zeta))$ admits a real-valued potential:
\begin{equation}\label{FU}
 F(\zeta)=\pa_{\ov\zeta} U(\zeta),\quad U\in C^2(\C^n),
\end{equation} 
 where  $\pa_{\ov\zeta_j}:=\frac12(\frac{\pa U}{\pa\zeta_{j1}}+i\frac{\pa U}{\pa\zeta_{j2}})$ with $\zeta_{j1}:=\Re\zeta_j$ and $\zeta_{j2}:=\Im\zeta_j$.
Let $G=\{g_{jk}\}$ be a matrix with the entries 
\begin{equation}\label{Gjk}
g_{jk}:=\left\{\begin{array}{c}
  \frac{e^{-m|y_j-y_k|}}{4\pi|y_j-y_k|},~~{\rm if}~~j\not =k
  \\\\
  0,\qquad~~{\rm if}~~j =k
 \end{array}\right.
\end{equation}
and let ${\mathcal G}(\zeta)=(G\zeta,\zeta)=\sum\limits_{1\le k,j\le n}g_{jk}\zeta_j\overline\zeta_k$.    
We  assume that $U(\zeta)$ is such that
\begin{equation}\label{bound-below}
U(\zeta)-{\mathcal G}(\zeta)\ge b\abs{\zeta}^2-a, \quad{\rm for}\ \zeta\in\C^n,\quad
{\rm where}\ b> 0~~~{\rm and}~~~a\in\R.
\end{equation}
Our main result is the following. For the initial data of type
\begin{equation}\label{in_d}
\psi(x,0)=\psi_0(x)+\sum\limits_{1\le j\le n}\zeta_{0j}g_j(x),\quad
\dot\psi(x,0)=\dot\psi_0(x)+\sum\limits_{1\le j\le n}\dot\zeta_{0j}g_j(x),
\end{equation}
where
$\psi_0\in H^2(\R^3)$ and $\dot\psi_0\in H^1(\R^3)$, we prove
a global well-posedness of the Cauchy problem for  (\ref{iKG})
(see Theorem \ref{theorem-well-posedness} below).

In the context of the 3D Schr\"odinger and wave equations the point interaction of type (\ref{iKG})
was introduced in \cite{AAFT} -\cite{ AGHH}, \cite{KP}-\cite{NP1},
where the well-posedness, blow-up and asymptotic stability of solutions  was studied.
The first justification of the model with a nonlinear point interaction was given in the NLS case 
in the recent paper \cite{CFNT}.
The  nonlinear Schr\"odinger dynamics with a nonlinearity concentrated at a point is
obtained in \cite{CFNT}  as a scaling limit of a regularized nonlinear Schr\"odinger dynamics.

However, for the 3D Klein-Gordon equation with a point interaction the global well-posedness
and a justification of the model has not been obtained. In present paper we concentrate on  the first problem only.
We suppose that a justification can be done by suitable modification of methods \cite{CFNT}, but
it still remains an open question.

Let us comment on our approach. 
We develop for the  Klein-Gordon equation the approach suggested in \cite{NP} for wave equation.
First we consider  the linear system (\ref{iKG}) with $n=1$ and
$F_1(\zeta_1)=\zeta_1$. In this case  the solution $\psi$ can be represent as a sum:
\[
\psi(x,t)=\psi_f(x,t)+\varphi(x,t),
\]
where $\psi_f(x,t)$ is a solution to the Cauchy problem for the free Klein-Gordon equation 
with the  initial data (\ref{in_d}), and
$\varphi(x,t)$ is a solution to the coupled system  of i) the Klein-Gordon equation with delta-like source
and with zero initial data, and of ii)  the first-order linear integro-differential equation  
which control the dynamics of the coefficients $\zeta_1(t)$.
As a consequence, we derive an important regularity  property of  $\psi_f(y_1,t)$ 
(see Proposition \ref{KurasovKG}, cf. also \cite [Lemma 3.4]{NP}). 

We use this regularity property for proving the existence of a local solution to (\ref{iKG})
in the case of nonlinear function $F(\zeta)$. 
Then we obtain the energy conservation  for the dynamics (\ref{iKG}). 
Finally, we use the energy conservation to obtain a global existence theorem. 

The Klein-Gordon equation differs from the wave equation, considered in \cite{KP}--\cite{NP1},  
by the absence of strong Huygens principle. This result in additional integral  terms
(convolutions with  Bessel functions) in many calculations. 

We expect that the  result and methods of present paper  will be useful for the theory of attractors for
${\rm U}(1)$-invariant Hamiltonian system (\ref{iKG}) 
(cf. \cite{ANO}, \cite{BKKS}, \cite{KK07}, \cite{KK09}). 

Our paper is organized as follows. In Section \ref{results} we formulate the main theorem.
In Section \ref{lin-sect} we study the structure of  solutions to the  Klein-Gordon equation
with linear one point interaction. In Section \ref{sect-psifreg} we prove the key
regularity  property of  solutions to the free  Klein-Gordon equation with initial data (\ref{in_d}).
In section \ref{KGD-sect} we derive some preliminary formulas.
In Section \ref{nonlin-sect} we consider the nonlinear equation and  prove the main theorem.

\section{Main result}
\label{results}
We  fix a nonlinear function $F:\C^n\to\C^n$ and define the domain
\begin{equation}\label{q}
D_F=\{\psi\in L^2(\R^3):\psi(x)=\psi_{reg}(x)
+\sum\limits_{1\le j\le n}\zeta_j g_j(x),~~\psi_{reg}\in H^2(\R^3),~~
 \zeta_j\in\C,~~\lim\limits_{x\to y_j}(\psi(x)-\zeta_jg_j(x))=F_j(\zeta)
 \},
\end{equation}
which generally is not a linear space. Note that the last condition in (\ref{q})
is equivalent to
\begin{equation}\label{q1}
\psi_{reg}(y_j)+\sum\limits_{1\le k\le n}g_{kj}\zeta_k=F_j(\zeta).
\end{equation}
Let $H_F$ be a nonlinear operator on the domain $D_F$ defined by 
\begin{equation}\label{HF}
 H_F \psi=(\Delta-m^2)\psi_{reg},\quad\psi\in D_F.
\end{equation}
The system (\ref{iKG}) for  $\psi(t)\in D_F$ reads
\begin{equation}\label{KG}
\ddot \psi(x,t)=H_F \psi(x,t),\quad x\in\R^3,\quad t\in\R.
\end{equation}
Let us introduce a phase space for equation (\ref{HF}). Denote 
\[
\dot D=\{\pi\in L^2(\R^3):\pi(x)=\pi_{reg}(x)+\sum\limits_{1\le j\le n}\eta_j g_j(x),
~~\pi_{reg}\in H^1(\R^3), ~~\eta_j\in\C\}.
\]
Obviously, $D_F\subset\dot D$.
\begin{definition}
\begin{enumerate}
\item
${\cal D}_F$ is the Hilbert space of  states 
$\Psi=(\psi(x),\pi(x))\in D_F\oplus\dot D$ equipped with the finite norm
\begin{equation}\label{def-e}
\Vert\Psi\Vert_{{\cal D}_F}^2:=\norm{\psi_{reg}}_{H^2(\R^3)}^2+
\norm{\pi_{reg}}_{H^1(\R^3)}^2+\sum\limits_{1\le j\le n}|\zeta_j|^2+\sum\limits_{1\le j\le n}|\eta_j|^2.
\end{equation}
\item
${\cal X}$ is the Hilbert space of the states 
$\Psi=(\psi(x),\pi(x))\in H^2(\R^3)\oplus H^1(\R^3)$ equipped with the finite  norm
\begin{equation}\label{def-e1}
\Vert\Psi\Vert_{\cal X}^2:=\norm{\psi}_{H^2(\R^3)}^2+
\norm{\pi}_{H^1(\R^3)}^2.
\end{equation}
\end{enumerate}
\end{definition}
Denote $\Vert\cdot\Vert=\Vert\cdot\Vert_{L^2(\R^3)}$.
Our main result is the following.
\begin{theorem}\label{theorem-well-posedness}
Let  conditions (\ref{FU}) and (\ref{bound-below}) hold. Then 
\begin{enumerate}
\item
For every initial data $\Psi_0=(\psi\sb 0,\pi\sb 0)\in {\cal D}_F$  the equation (\ref{HF})
has a unique solution $\psi(t)$ such that 
\[
\Psi(t)=(\psi(t),\dot\psi(t))\in C(\R,{\cal D}_F).
\]
\item
The map $W(t):\;\Psi_0\mapsto \Psi(t)$ is continuous in ${\cal D}_F$ for each $t\in\R$.
\item
The energy is conserved, i.e.,
\begin{equation}\label{ec}
{\cal H}_F(\Psi(t)):=
\Vert\dot\psi(t)\Vert^2+\Vert\nabla\psi_{reg}(t)\Vert^2
+m^2\Vert\psi_{reg}(t)\Vert^2+U(\zeta(t))-{\cal G}(\zeta(t))=\const, \quad t\in\R.
\end{equation}
\item
The following a priori bound holds
\begin{equation}\label{apb}
|\zeta(t)|\le C(\Psi_0),\quad t\in\R. 
\end{equation}
\end{enumerate}
\end{theorem}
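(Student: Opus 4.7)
The plan is to follow the outline sketched in the introduction: reduce (\ref{iKG}) to a Volterra-type integral equation for the coefficients $\zeta(t)$, use it together with Proposition \ref{KurasovKG} to obtain a local solution, then derive the energy identity and convert it via (\ref{bound-below}) into an a priori bound on $\zeta$ that rules out blow-up. For a candidate solution $\psi(t)\in{\cal D}_F$ I would decompose $\psi(x,t)=\psi_f(x,t)+\varphi(x,t)$, where $\psi_f$ is the free Klein-Gordon evolution of $(\psi_0,\dot\psi_0)\in H^2(\R^3)\oplus H^1(\R^3)$ and $\varphi$ solves the inhomogeneous problem $\ddot\varphi=(\Delta-m^2)\varphi+\sum_j \zeta_j(t)\delta(x-y_j)$ with zero Cauchy data. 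Writing $\varphi$ via Duhamel's formula and imposing the matching condition (\ref{q1}) at each point $y_j$ produces a closed nonlinear system of Volterra equations for $\zeta$ whose forcing term involves the traces $\psi_f(y_j,t)$ and whose memory kernel contains convolutions with Bessel-type functions arising from the failure of strong Huygens.

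For local existence I would exploit the crucial regularity $\psi_f(y_j,\cdot)\in C(\R)$ provided by Proposition \ref{KurasovKG}, so that the forcing of the Volterra system is continuous in $t$. Together with the local Lipschitz property of $F=\pa_{\ov\zeta}U$ guaranteed by $U\in C^2$, a standard contraction-mapping argument in $C([0,T],\C^n)$ yields a unique $\zeta\in C^1([0,T],\C^n)$ on a time interval $T>0$ depending only on $|\zeta(0)|$ and the local sup-norm of $\psi_f(y_j,\cdot)$. Setting $\psi_{reg}(t):=\psi(t)-\sum_j \zeta_j(t)g_j$ and using the mapping properties of the Klein-Gordon propagator applied to the $\delta$-source (available once $\zeta\in C^1$), one then verifies $\Psi(t)\in C([0,T],{\cal D}_F)$; continuous dependence of $\zeta$ on $(\zeta_0,\dot\zeta_0,\psi_0,\dot\psi_0)$ follows from the same fixed-point estimate, yielding item (ii) on the local interval.

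Next I would differentiate ${\cal H}_F(\Psi(t))$ formally using $\ddot\psi=(\Delta-m^2)\psi_{reg}$ and integration by parts; the boundary contributions at the points $y_j$, computed via the decomposition $\psi=\psi_{reg}+\sum_j \zeta_j g_j$ and the matching condition (\ref{q1}), combine with $\dot U(\zeta)-\dot{\cal G}(\zeta)$ to cancel. A clean way to justify the cancellation is to work with the integral representation produced in Step 1 and differentiate directly there, which avoids manipulations with distributions supported at the points $y_j$. Conservation of ${\cal H}_F$ combined with the coercivity (\ref{bound-below}) immediately yields
\[
b\,|\zeta(t)|^2\le {\cal H}_F(\Psi_0)+a,
\]
which is statement (iv). Since the local existence time above depends only on $|\zeta|$, this a priori bound allows iteration of the local theorem to reach every $t\in\R$, establishing (i) and extending (ii) globally.

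The main obstacle I anticipate is the rigorous derivation of the energy identity in Step 3: the naive integration-by-parts argument is singular at the points $y_j$, and one must exploit the precise regular/singular splitting of $\psi$, the non-trivial nonlinear matching condition (\ref{q1}), and the symmetry of $G$ to see how exactly the quadratic form $-{\cal G}(\zeta)$ emerges in the conserved quantity ${\cal H}_F$. The Bessel-type memory terms in the Volterra system also require careful bookkeeping when differentiating in time.
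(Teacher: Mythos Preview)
Your outline matches the paper's strategy, but there is one genuine gap and one methodological difference worth flagging.

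The gap is in how you plan to verify $\Psi(t)\in{\cal D}_F$, i.e., $\psi_{reg}(t)\in H^2(\R^3)$. You write that this follows from ``mapping properties of the Klein--Gordon propagator applied to the $\delta$-source (available once $\zeta\in C^1$)'', but $\zeta\in C^1$ is not enough. The regular part satisfies
\[
\ddot\psi_{reg}=(\Delta-m^2)\psi_{reg}-\sum_j\ddot\zeta_j\,g_j,
\]
and by Lemma~\ref{e-1} one has $(\psi_{reg},\dot\psi_{reg})\in{\cal X}$ if and only if $\ddot\zeta_j\in L^2_{loc}$. Obtaining this second derivative requires differentiating the integro-differential system (\ref{delay}) in $t$, and two things then matter. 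First, this is precisely where Proposition~\ref{KurasovKG} enters: it gives $\dot\lambda_j\in L^2_{loc}$, not merely $\lambda_j\in C$ as you state (continuity of $\lambda_j$ is established separately in Section~\ref{KGD-sect}). Second, differentiating the delay terms in (\ref{delay}) produces $\delta$-functions at $t=|y_j-y_k|$, which individually are not in $L^2$; the paper observes (Remark~\ref{rm1} and the proof of Proposition~\ref{TLWP}) that these cancel exactly against matching singularities in $\dot\lambda_{j,k}$ for $k\ne j$. Without identifying this cancellation the argument does not close. Relatedly, your description of $\psi_f$ as the free evolution of data in $H^2\oplus H^1$ is wrong: $\psi_f$ must carry the \emph{full} initial data including the singular parts $\zeta_{0j}g_j$, and it is this singular contribution that produces the $\delta$-terms in $\dot\lambda_{j,k}$ needed for the cancellation.

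The methodological difference is that the paper does not run the contraction argument directly with the locally Lipschitz $F$. It first replaces $U$ by a modified $\tilde U$ equal to $U$ on $\{|\zeta|\le\Lambda(\Psi_0)\}$, still satisfying (\ref{bound-below}), and with $\tilde F=\partial_{\bar\zeta}\tilde U$ globally Lipschitz. Local existence and energy conservation are proved for this truncated problem; then (\ref{bound-below}) forces $|\zeta(t)|\le\Lambda(\Psi_0)$, so $\tilde F(\zeta(t))=F(\zeta(t))$ and the tildes may be dropped. The gain is a local existence time $\tau$ depending only on $\Lambda(\Psi_0)$, which makes the global iteration immediate. Your direct approach with local Lipschitz $F$ can be made to work, but you would need an extra argument that the local time stays bounded below along the iteration.
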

Obviously, it suffices to prove Theorem \ref {theorem-well-posedness} for $t\ge 0$.

\section{Linear equation:  structure of solution}
\label{lin-sect}
Here we consider the  Klein-Gordon equation with a linear one point interaction, i.e. $n=1$.
More precisely, denote 
\[
D_y=\{\psi\in L^2(\R^3):\psi=\psi_{reg}+\xi g(x-y),~~\psi_{reg}\in H^2(\R^3),~~
\psi_{reg}(y)=\xi\in\C\},
\]
\[
\dot D_y=\{\pi\in L^2(\R^3):\pi(x)=\pi_{reg}(x)+\chi g(x-y),
~~\pi_{reg}\in H^1(\R^3), ~~\chi\in\C\},
\]
and consider the operator
\begin{equation}\label{Hl}
H_y\psi:=(\Delta-m^2)\psi_{reg},\quad\psi\in D_y.
\end{equation}
\begin{proposition}\label{KurasovLin}
Let $\psi_0 =\psi_{0,reg}+\xi_{0}g(x-y)\in D_y$ 
and $\pi_0=\pi_{0,reg}+\dot\xi_{0}g(x-y) \in\dot D_y$.
Then the Cauchy problem
\begin{equation}\label{CPL}
\ddot{\psi}(x,t) = H_y\psi(x,t),\quad \psi(x,0) = \psi_0(x),\quad\dot\psi(x,0)  =  \pi_0(x),
\end{equation}
has a unique strong solution $\psi(t) \in C(\R,D_y)\cap C^1(\R,\dot D_y)\cap C^2(\R,L^2(\R^3))$.
\end{proposition}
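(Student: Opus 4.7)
The plan is to construct the solution by decomposing
\[
\psi(x,t) = \psi_f(x,t) + \varphi(x,t),
\]
where $\psi_f$ solves the free Klein-Gordon equation $\ddot\psi_f=(\Delta-m^2)\psi_f$ with initial data $(\psi_0,\pi_0)$, and $\varphi$ corrects for the point interaction. First I would define $\psi_f$ via the standard $L^2$-propagator of $\sqrt{-\Delta+m^2}$. Although the singular pieces $\xi_0\, g(\cdot - y)$ and $\dot\xi_0\, g(\cdot - y)$ in the initial data sit only in $L^2$, the regular pieces live in $H^2$ and $H^1$ and the free KG flow is unitary on $L^2$, so $\psi_f \in C(\R, L^2)$ is well-defined; the trace $t \mapsto \psi_f(y,t)$ will be made sense of via the regularity property stated later as Proposition~\ref{KurasovKG}, which I take as input at this stage.

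Second, the remainder $\varphi$ should satisfy
\[
\ddot\varphi - (\Delta - m^2)\varphi = \xi(t)\,\delta(x-y), \qquad \varphi|_{t=0}=0,\ \ \dot\varphi|_{t=0}=0,
\]
for an unknown scalar $\xi(t)$. I would write $\varphi$ by Duhamel's principle against the retarded Klein-Gordon propagator in $\R^3$, whose integral kernel contains the sharp Kirchhoff wave contribution plus a regular tail of the form $\frac{m J_1(m\sqrt{t^2-r^2})}{4\pi\sqrt{t^2-r^2}}$ supported on the forward cone. Isolating the instantaneous singularity at $x = y$, I expect a splitting
\[
\varphi(x,t) = \xi(t)\,g(x-y) + \varphi_{reg}(x,t),
\]
where $\varphi_{reg}(\cdot,t)\in H^2(\R^3)$ depends linearly on $\xi|_{[0,t]}$ through an explicit Bessel-type convolution.

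Third, imposing the domain condition $\psi_{reg}(y,t) = \xi(t)$ from (\ref{q1}) yields a Volterra-type integro-differential equation
\[
\xi(t) = \psi_f(y,t) + \int_0^t K(t-s)\,\xi(s)\,ds,
\]
with a (weakly singular) kernel $K$ determined by the $\xi$-dependent part of $\varphi_{reg}(y,t)$. I would then invoke classical Volterra theory, or equivalently a contraction on a short interval iterated to cover $\R$, to obtain a unique $\xi \in C^1(\R)$ driven by the forcing $\psi_f(y,\cdot)$, with $\xi(0)=\xi_0$, $\dot\xi(0)=\dot\xi_0$ enforced by the matching of initial data. Setting $\psi=\psi_f+\xi(t)\,g(\cdot-y)+\varphi_{reg}$ and tracking the regularity of each summand, I would verify that $\Psi=(\psi,\dot\psi)\in C(\R,D_y)\cap C^1(\R,\dot D_y)\cap C^2(\R,L^2)$; uniqueness follows because any strong solution of (\ref{CPL}), after the same splitting, must satisfy the same Volterra equation for its coefficient.

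The main obstacle will be the rigorous extraction of the $\xi(t)\,g(x-y)$ singularity from the Duhamel integral of $\xi(s)\delta_y$ against the retarded Green's function, together with the attendant identification of the kernel $K$. Unlike the wave-equation case treated in \cite{NP}--\cite{NP1}, the Klein-Gordon propagator contributes an additional nonlocal Bessel tail that feeds back into the equation for $\xi$, so $K$ is a convolution involving $J_1$ rather than a pure delta; care is then needed to check both that this tail does not destroy solvability of the Volterra equation, and that the resulting $\varphi_{reg}$ truly lies in $H^2(\R^3)$ so that $\psi(t)\in D_y$.
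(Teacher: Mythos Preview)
Your approach is genuinely different from the paper's. The paper proves Proposition~\ref{KurasovLin} in three lines of abstract operator theory: it checks that $H_y$ is symmetric and that $-H_y\ge 0$ on $D_y$, concludes that $H_y$ is (essentially) self-adjoint, and then invokes the standard theory of abstract second-order evolution equations $\ddot\psi = H_y\psi$ in Hilbert space (Goldstein \cite{G}) to obtain existence, uniqueness, and the stated regularity all at once. No explicit propagator, no Volterra equation, no Bessel tails.

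What you outline---free evolution plus a Duhamel correction driven by $\xi(t)\delta_y$, with $\xi$ determined by the boundary condition at $y$---is essentially the content of the paper's \emph{next} result, Lemma~\ref{KurasovLin1}, which derives the integral representation \emph{after} existence is already secured by Proposition~\ref{KurasovLin}. Your plan inverts this order, using the representation constructively; that can be made to work and has the merit of making the solution structure explicit from the outset, but two points need correction. First, invoking Proposition~\ref{KurasovKG} to make sense of $\psi_f(y,t)$ is circular: the paper's proof of Proposition~\ref{KurasovKG} uses the solution of (\ref{CPL}) furnished by Proposition~\ref{KurasovLin}. You must instead establish the existence and continuity of $\lambda(t)=\lim_{x\to y}\psi_f(x,t)$ directly, as the paper does inside the proof of Lemma~\ref{KurasovLin1} by splitting $\psi_f$ into its regular and singular initial-data pieces. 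Second, the equation you display for $\xi$ is a pure Volterra integral equation, but the correct equation (compare (\ref{zeta-sol})) is genuinely first-order integro-differential: the sharp Kirchhoff part of the retarded propagator gives $\xi(t-|x-y|)/(4\pi|x-y|)$ rather than $\xi(t)g(x-y)$, and subtracting $\xi(t)g(x-y)$ and sending $x\to y$ produces a $\dot\xi(t)$ term. Your solvability and regularity verification would need to accommodate this derivative.
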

\begin{proof}
The operator $H_y$ is a symmetric  operator, since 
$$
\langle H_y\psi,\varphi\rangle=
\langle (\Delta-m^2)\psi_{reg},\varphi_{reg}
+\xi_{\varphi}g(\cdot-y)\rangle
=\langle \psi_{reg},(\Delta-m^2)\varphi_{reg}\rangle
-\xi_\psi\overline\xi_{\varphi}=\langle \psi,H_y\varphi\rangle.
$$
Moreover,
$$
\langle -H_y\psi,\psi\rangle=\Vert\nabla\psi_{reg}\Vert^2+m^2\Vert\psi_{reg}\Vert^2
+\xi_\psi\overline\xi_{\psi}\ge 0.
$$
Hence, $H_y$  admits a unique selfajoint extension, and 
the  corresponding Cauchy problem has a unique strong solution in
$C(\R,D_y)\cap C^1(\R,\dot D_y)\cap C^2(\R,L^2(\R^3))$
by the theory of abstract wave equations in Hilbert spaces 
(see e.g. \cite[Chapter 2, Section 7]{G}). 
\end{proof}
Proposition \ref{KurasovLin} implies that the strong solution $\psi(x,t)$ 
of (\ref{CPL}) satisfies
\begin{equation}\label{sol2}
\psi(x,t)=\psi_{reg}(x,t)+\xi(t)g(x-y),\quad\psi_{reg}(y,t)=\xi(t)
\end{equation}
with $\psi_{reg}(t)\in C(\R,H^2(\R^3))$ and $\xi\in C^1(\R)$.
Now we obtain an integral representation for  
$\psi(x,t)$ via a solution to integro-differential equation.
\begin{lemma}\label{KurasovLin1} (cf. \cite [Theorem 3]{KP}).
The strong solution $\psi(x,t) \in C(\R,D_y)\cap C^1(\R,\dot D_y)\cap C^2(\R,L^2(\R^3))$ of the Cauchy problem (\ref{CPL}) for $t\ge 0$
is  given by
\begin{equation}\label{sol11}
\psi(x,t)=\psi_f(x,t)+\frac{\theta(t-|x-y|)}{4\pi|x-y|}\xi(t-|x-y|)-\frac{m}{4\pi}
\int_0^t\frac{\theta(s-|x-y|)J_1(m\sqrt{s^2-|x-y|^2})}{\sqrt{s^2-|x-y|^2}}\xi(t-s)ds.
\end{equation}
Here $\psi_f(x,t)\in C([0,\infty),L^2(\R^3))$ is the unique solution 
to the Cauchy problem for the free Klein-Gordon equation
\begin{equation}\label{CP1}
\ddot{\psi}_f(x,t) = (\Delta-m^2)\psi_f(x,t),
\quad \psi_f(x,0) = \psi_0(x),\quad\dot\psi_f(x,0)  =  \pi_0(x),
\end{equation}
and  $\xi(t)\in C^1([0,\infty))$ is the unique solution to the Cauchy problem 
for the following first-order linear  integro-differential equation with delay
\begin{equation}\label{zeta-sol}
\frac {1}{4\pi}(\dot\xi(t)-m\xi(t))+\xi(t)-\lambda(t)+\frac{m}{4\pi}
\int_0^t\frac{J_1(m(t-s))}{t-s}\xi(s)ds=0,\quad \xi(0)=\xi_{0},\quad t\ge 0,
\end{equation}
where $\lambda(t)\in C([0,\infty))$, and $\lambda(t):=\lim\limits_{x\to y}\psi_f(x,t)$ for $t>0$.
\end{lemma}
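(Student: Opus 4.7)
The plan is to construct $\psi$ by decomposing it into a free piece plus a point-source-driven piece, and then extract (\ref{zeta-sol}) from the boundary condition at $y$. By Proposition \ref{KurasovLin} a unique strong solution $\psi(t)\in C(\R,D_y)\cap C^1(\R,\dot D_y)$ exists, and by definition of $D_y$ it can be written as $\psi(x,t)=\psi_{reg}(x,t)+\xi(t)g(x-y)$ with $\psi_{reg}\in C(\R,H^2(\R^3))$ and $\xi\in C^1(\R)$ (the regularity of $\xi$ following from $\psi\in C^1(\R,\dot D_y)$). Since $(-\Delta+m^2)g(\cdot-y)=\delta(x-y)$, the equation $\ddot\psi=H_y\psi=(\Delta-m^2)\psi_{reg}$ rewrites in $\mathcal{D}'(\R^3)$ as
\[
\ddot\psi(x,t)-(\Delta-m^2)\psi(x,t)=\xi(t)\delta(x-y).
\]
Setting $\psi=\psi_f+\varphi$ with $\psi_f$ the free solution of (\ref{CP1}), the remainder $\varphi$ satisfies the inhomogeneous Klein--Gordon equation with source $\xi(t)\delta(x-y)$ and zero initial data.

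Next, Duhamel's principle together with the standard 3D retarded Klein--Gordon fundamental solution
\[
K(x,t)=\frac{\theta(t)\delta(t-|x|)}{4\pi|x|}-\frac{m\theta(t-|x|)J_1(m\sqrt{t^2-|x|^2})}{4\pi\sqrt{t^2-|x|^2}}
\]
gives $\varphi(x,t)=\int_0^t K(x-y,t-s)\xi(s)\,ds$. Integrating the $\delta$-component in $s$ produces the outgoing spherical-wave term $\theta(t-|x-y|)\xi(t-|x-y|)/(4\pi|x-y|)$ appearing in (\ref{sol11}), while the change of variable $s\mapsto t-s$ in the Bessel tail yields the remaining convolution integral; adding $\psi_f$ establishes (\ref{sol11}).

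To derive (\ref{zeta-sol}) I would impose the compatibility condition $\psi_{reg}(y,t)=\xi(t)$, i.e.\ $\lim_{x\to y}[\psi(x,t)-\xi(t)g(x-y)]=\xi(t)$, and expand each term of (\ref{sol11}) as $r:=|x-y|\to 0$. The free part obeys $\psi_f(x,t)\to\lambda(t)$; the $C^1$-regularity of $\xi$ yields $\xi(t-r)/(4\pi r)=\xi(t)/(4\pi r)-\dot\xi(t)/(4\pi)+o(1)$; the expansion $g(x-y)=1/(4\pi r)-m/(4\pi)+O(r)$ cancels the $1/r$ singularity and leaves $(m\xi(t)-\dot\xi(t))/(4\pi)$; and the Bessel integral converges to $-\tfrac{m}{4\pi}\int_0^t J_1(ms)\xi(t-s)/s\,ds$. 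Collecting these limits and changing variable $s\mapsto t-s$ in the integral reproduces (\ref{zeta-sol}) exactly. Uniqueness of $\xi\in C^1$ solving (\ref{zeta-sol}) then follows from a standard Volterra contraction on each finite time interval, since the kernel $J_1(m(t-s))/(t-s)$ is continuous (extending to $m/2$ as $s\to t$).

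The main obstacle is the pointwise limit $\lambda(t)=\lim_{x\to y}\psi_f(x,t)$ and its continuity in $t$: the initial data live only in $\dot D_y$ and are genuinely singular at $y$, so this pointwise regularity is not provided by the $L^2$ energy theory. It is precisely the ``key regularity property'' supplied by Proposition \ref{KurasovKG}, which I would invoke to guarantee $\lambda\in C([0,\infty))$ and thereby close the argument.
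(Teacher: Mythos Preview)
Your overall strategy---decompose $\psi=\psi_f+\varphi$, identify $\varphi$ via the retarded Klein--Gordon fundamental solution, and extract (\ref{zeta-sol}) from the boundary condition by expanding near $x=y$---is exactly the paper's approach, and the computations you sketch for the $r\to 0$ limits match those in the paper.

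There is, however, a genuine gap in the last paragraph. You propose to obtain $\lambda\in C([0,\infty))$ by invoking Proposition~\ref{KurasovKG}, but that proposition is proved \emph{after} Lemma~\ref{KurasovLin1} and its proof explicitly uses equation~(\ref{zeta-sol}) (the last line of the proof of Proposition~\ref{KurasovKG} reads ``Then $\dot\lambda\in L^2_{loc}([0,\infty))$ by (\ref{zeta-sol})''). So this appeal is circular. Moreover, Proposition~\ref{KurasovKG} asserts $\dot\lambda\in L^2_{loc}$, not $\lambda\in C$; you would still need the existence of the pointwise limit $\lambda(t)$ for each $t>0$ before invoking any regularity of $\dot\lambda$.

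The paper closes this gap inside the proof of Lemma~\ref{KurasovLin1} by a direct, elementary computation: split $\psi_f=\psi_{f,reg}+\psi_{f,y}$, where $\psi_{f,reg}$ is the free evolution of the regular part $(\psi_{0,reg},\pi_{0,reg})\in H^2\times H^1$ (so $\psi_{f,reg}(y,\cdot)\in C([0,\infty))$ by Sobolev embedding), and $\psi_{f,y}$ is the free evolution of the singular data $(\xi_0 g(\cdot-y),\dot\xi_0 g(\cdot-y))$. For the latter, observe that $\eta_y:=\psi_{f,y}-(\xi_0+t\dot\xi_0)g(\cdot-y)$ solves $\ddot\eta_y=(\Delta-m^2)\eta_y-(\xi_0+t\dot\xi_0)\delta(\cdot-y)$ with zero data; applying the same retarded fundamental solution yields an explicit formula for $\psi_{f,y}$ from which $\lambda_y(t)=\lim_{x\to y}\psi_{f,y}(x,t)$ is read off and seen to be continuous on $[0,\infty)$ (with $\lambda_y(0)$ defined as the limit $t\downarrow 0$). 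This self-contained step replaces your forward reference and is what you should insert to complete the argument.
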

\begin{proof}
In notation (\ref{sol11}) define the function
\begin{equation}\label{pp-rep}
\varphi(x,t):=\frac{\theta(t-|x-y|)}{4\pi|x-y|}\xi(t-|x-y|)-\frac{m}{4\pi}
\int_0^t\frac{\theta(s-|x-y|)J_1(m\sqrt{s^2-|x-y|^2})}
{\sqrt{s^2-|x-y|^2}}\xi(t-s)ds.
\end{equation}
It is easy to verify that 
$\varphi(t)\in C([0,\infty),L^2(\R^3))$ and satisfy the equation
\begin{equation}\label{CP2}
\ddot{\varphi}(x,t)= (\Delta-m^2) \varphi(x,t) +\xi(t)\delta(x-y),
\quad \varphi(x,0) = 0,\quad\dot\varphi(x,0)=0.
\end{equation}
Hence, for $\psi_f=\psi-\varphi\in C([0,\infty),L^2(\R^3))$,  we obtain
\begin{eqnarray*}
&&\ddot{\psi}_f(x,t)=\ddot\psi(x,t)-\ddot\varphi(x,t)
=(\Delta-m^2)\psi_{reg}(x,t)-\Big((\Delta-m^2)\varphi(x,t)+\xi(t)\delta(x-y)\Big)\\
\nonumber
&&=(\Delta-m^2)\Big(\psi(x,t)-\xi(t)g(x-y)\Big)
-\Big((\Delta-m^2)\varphi(x,t+\xi(t)\delta(x)\Big)\\
\nonumber
&&=(\Delta-m^2)\Big(\psi(x,t)-\varphi(x,t)\Big)=(\Delta-m^2)\psi_f(x,t),
\end{eqnarray*}
and thus $\psi_f$ is the solution to the Cauchy problem (\ref{CP1}).
\smallskip\\
Let us prove the existence and continuity of $\lambda(t)=\lim\limits_{x\to y}\psi_{f}(x,t)$.
We can split  $\psi_f(x,t)$ as
\[
\psi_f(x,t)=\psi_{f,reg}(x,t)+\psi_{f,y}(x,t),
\]
where $\psi_{f,reg}$ is the solution to the free Klein-Gordon equation with regular initial data
$\psi_{0,reg}$, $\pi_{0,reg}$, and $\psi_{f,y}$ are the solutions to the free Klein-Gordon equation 
with initial data $\xi_{0} g(x-y)$, $\dot\xi_{0} g(x-y)$.
Evidently, $\psi_{f,reg}(x,t)\in C([0,\infty), H^2(\R^3))$ and there exists
\begin{equation}\label{lamreg}
\lambda_{reg}(t):=\lim\limits_{x\to y}\psi_{f,reg}(x,t)=\psi_{f,reg}(y,t)\in C([0,\infty)).
\end{equation}
Now consider $\psi_{f,y}(x,t)$. Note that the function
\[
\eta_y(x,t):=\psi_{f,y}(x,t)-(\xi_{0}+t\dot\xi_{0})\,g(x-y)
\] 
satisfies 
\[
\ddot\eta_y(x,t)=(\Delta-m^2)\eta_y(x,t)-(\xi_{0}+t\dot\xi_{0})\,\delta(x-y)
\]
with zero initial data. Therefore, one obtains
\begin{eqnarray*}
\eta_y(x,t)
&=&-\int_0^t\Big(\frac{\delta(t-s-|x-y|)}{4\pi (t-s)}
-\frac{m}{4\pi}\frac{\theta(t-s-|x-y|)J_1(m\sqrt{(t-s)^2-|x-y|^2})}
{\sqrt{(t-s)^2-|x-y|^2}}\Big)(\xi_{0}+s\dot\xi_{0})ds\\
&=&-\frac{\theta(t-|x-y|)(\xi_{0k}+(t-|x-y|)\dot\xi_{0})}{4\pi|x-y|}\\
&+&\frac{m}{4\pi}\int_0^t\frac{\theta(t-s-|x-y|)
J_1(m\sqrt{(t-s)^2-|x-y|^2})}{\sqrt{(t-s)^2-|x-y|^2}}(\xi_{0}+s\dot\xi_{0})ds.
\end{eqnarray*}
Hence, 
\begin{eqnarray}\nonumber
\psi_{f,y}(x,t)&=&-\frac{\theta(t-|x-y|)(\xi_{0}+(t-|x-y|)\dot\xi_{0})}{4\pi|x-y|}
+\frac{(\xi_{0}+t\dot\xi_{0})e^{-m|x-y|}}{4\pi|x-y|}\\
\label{psify}
&+&\frac{m}{4\pi}\int_0^t\frac{\theta(t-s-|x-y|)
J_1(m\sqrt{(t-s)^2-|x-y|^2})}{\sqrt{(t-s)^2-|x-y|^2}}(\xi_{0}+s\dot\xi_{0})ds,\quad t\ge 0.
\end{eqnarray}
Therefore, for any $t>0$ there exists 
\begin{equation}\label{lamG}
\lambda_{y}(t)=\lim_{x \to y}\,\psi_{f,y}(x,t)=
-\frac{m(\xi_{0}+t\dot\xi_{0})-\dot\xi_0}{4\pi}
+\frac{m}{4\pi}\int_0^t \frac{J_1(m(t-s))}{(t-s)}(\xi_{0}+s\dot\xi_{0})ds,
\end{equation}
and we set
\begin{equation}\label{lamG0}
\lambda_{y}(0)=\lim_{t \to 0}\lambda_{y}(t)=-\frac{m\xi_{0}-\dot\xi_{0}}{4\pi}.
\end{equation}
Thus, (\ref{lamreg})--(\ref{lamG0}) imply that
\[
\lambda(t)=\lambda_{reg}(t)+\lambda_{y}(t)\in C([0,\infty)).
\]
It remains to prove that $\xi(t)$ is  a solution to the Cauchy problem (\ref{zeta-sol}). 
Writing the second equation of (\ref{sol2})  in terms of the decomposition 
(\ref{sol11}), we obtain
\begin{eqnarray}\nonumber
\xi(t)\!\!\!&&\!\!\!
=\lim_{x \to y}\, ( \psi(t,x)-\xi(t) g(x-y))
=\lim_{x \to y}\, \big( \psi_f(t,x)+\varphi(x,t)-\xi(t) g(x-y)\big)\\
\nonumber
\!\!\!&&\!\!\!=\lambda(t)+
\lim_{x\to y}\big(\frac{\theta(t-|x-y|)\xi(t-|x-y|)}{4\pi|x-y|}-\frac{\xi(t)e^{-m|x-y|}}{4\pi|x-y|}\big)\\
\nonumber
\!\!\!&&\!\!\!-\lim_{x\to y}\frac{m}{4\pi}
\int_0^t\frac{\theta(t-s-|x-y|)J_1(m\sqrt{(t-s)^2-|x-y|^2})}{\sqrt{(t-s)^2-|x-y|^2}}\xi(s)ds\Big)
\\
\label{lim_z}
\!\!\!&&\!\!\!=\lambda(t)-\frac{1}{4\pi}(\dot\xi(t)-m\xi(t))
-\frac{m}{4\pi}\int_0^t\frac{J_1(m(t-s)}{t-s}\xi(s)ds,
\quad t\ge0.
\end{eqnarray}
Hence $\xi(t)$ satisfies (\ref{zeta-sol}). 
\end{proof}

\section{Regularity property}
\label{sect-psifreg}
Here we establish the key regularity property of solution $\psi_{f}(x,t)$ to the free Klein-Gordon equation 
with initial data from $\cal X$.
First  we prove an auxiliary lemma.
\begin{lemma}\label{e-1} (cf. \cite [Lemma 3.2]{NP}).
Let $0<T<\infty$, $y\in\R^3$, $f(t)\in C([0,T])$, and $h(t)=\dot f(t)$ in the sense of distribution.
Let $u(x,t)\in C([0,T],L^2(\R^3))$ be the solution to the  Cauchy problem
\begin{equation}\label{KGG}
\ddot{u}(x,t)  =  (\Delta-m^2) u(x,t) +h(t)\, g(x-y),
\quad u(x,0)=u_0(x),\quad \dot u(x,0)=\dot u_0(x)
\end{equation}
 with initial data  $(u_0, \dot u_0)\in {\cal X}$. Then
$h\in L^2([0,T])$ if and only if  $(u(t),\dot u(t)\in C([0,T],{\cal X})$.
\end{lemma}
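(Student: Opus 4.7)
By superposition, write $u=u_{\mathrm{hom}}+u_{\mathrm{inh}}$, with $u_{\mathrm{hom}}$ the free evolution of the initial data (hence in $C([0,T],{\cal X})$ by the classical Klein--Gordon theory, using $(u_0,\dot u_0)\in{\cal X}$) and $u_{\mathrm{inh}}$ the Duhamel term with zero initial data and source $h(t)g(\cdot-y)$. Since $(-\Delta+m^2)g(\cdot-y)=\delta_y$, applying $(-\Delta+m^2)$ to the equation for $u_{\mathrm{inh}}$ shows that $v:=(-\Delta+m^2)u_{\mathrm{inh}}$ satisfies
\[
\ddot v=(\Delta-m^2)v+h(t)\delta(x-y),\qquad v(\cdot,0)=\dot v(\cdot,0)=0,
\]
which is the setting of Lemma~\ref{KurasovLin1} with $\xi$ there replaced by $h$; in particular (\ref{pp-rep}) gives
\[
v(x,t)=\frac{\theta(t-r)\,h(t-r)}{4\pi r}-\frac{m}{4\pi}\int_r^t \frac{J_1(m\sqrt{s^2-r^2})}{\sqrt{s^2-r^2}}\,h(t-s)\,ds,\qquad r=|x-y|.
\]
Since $\|u_{\mathrm{inh}}(t)\|_{H^2(\R^3)}$ is equivalent to $\|v(t)\|_{L^2(\R^3)}$, the task reduces to showing $h\in L^2([0,T])\Leftrightarrow v\in C([0,T],L^2(\R^3))$.

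For the ``$\Leftarrow$'' direction I would pass to the Fourier side. With $\omega=\sqrt{|\xi|^2+m^2}$ and $\hat g=1/\omega^2$, Duhamel yields $\omega^2\hat u_{\mathrm{inh}}(\xi,t)=(e^{-i\xi\cdot y}/\omega)\int_0^t \sin(\omega(t-s))h(s)\,ds$, and a similar formula for $\omega\hat{\dot u}_{\mathrm{inh}}$ with $\cos$ in place of $\sin$. Both time-integrals are pointwise bounded by $|H(\omega)|$, where $H:=\widehat{h\chi_{[0,t]}}$; after passing to spherical coordinates in $\xi$ and substituting $\omega=\sqrt{r^2+m^2}$ (so that $r^2\,dr=\omega\sqrt{\omega^2-m^2}\,d\omega$), Plancherel applied to $H$ yields $\|u_{\mathrm{inh}}(t)\|_{H^2}^2+\|\dot u_{\mathrm{inh}}(t)\|_{H^1}^2\le C\|h\|_{L^2([0,t])}^2$. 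Continuity of $(u_{\mathrm{inh}},\dot u_{\mathrm{inh}})$ into ${\cal X}$ then follows by density: approximating $h$ in $L^2$ by smooth $h_n$ for which the solution is classical and applying the estimate to $u_n-u_m$ shows that $(u_n,\dot u_n)$ is Cauchy in $C([0,T],{\cal X})$.

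For ``$\Rightarrow$'', I would exploit the spherical symmetry of $v$ about $y$ and set $\rho(r,t):=4\pi r\,v(r,t)$. A direct computation gives
\[
\|v(\cdot,t)\|_{L^2(\R^3)}^2=\frac{1}{4\pi}\int_0^t|\rho(r,t)|^2\,dr,\qquad \rho(r,t)=h(t-r)-mr\int_r^t\frac{J_1(m\sqrt{s^2-r^2})}{\sqrt{s^2-r^2}}\,h(t-s)\,ds,
\]
so $v(\cdot,t)\in L^2(\R^3)$ if and only if $\rho(\cdot,t)\in L^2([0,t])$. After the substitution $\sigma=t-r$, $\tau=t-s$, the map $h\mapsto\rho$ takes the form $(I-K_t)h=\tilde\rho$, where $K_t$ is a Volterra integral operator on $L^2([0,t])$ with kernel uniformly bounded by $m^2 T/2$ (since $|J_1(z)/z|\le 1/2$). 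The Neumann series $(I-K_t)^{-1}=\sum_{k\ge 0}K_t^k$ converges in operator norm by the usual estimates on iterated Volterra kernels, whence $\|h\|_{L^2([0,t])}\le C(T)\|v(\cdot,t)\|_{L^2(\R^3)}$; taking $t=T$ yields $h\in L^2([0,T])$.

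The delicate point is justifying the representation~(\ref{pp-rep}) and the $L^2$-identity for $\|v\|_{L^2}^2$ when $h$ is a priori only a distribution (as is the case in the ``$\Rightarrow$'' direction, where $h=\dot f$ with $f\in C([0,T])$). I would handle this by first establishing everything for smooth $h$, where all steps are classical, then mollifying in time, applying the uniform bound $\|h_\varepsilon\|_{L^2}\le C\|v_\varepsilon\|$ from the Volterra inversion, and passing to the limit via lower semicontinuity to conclude $h\in L^2([0,T])$.
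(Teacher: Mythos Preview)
Your proof is correct and shares the paper's scaffolding---split off the free evolution, reduce to $v=(-\Delta+m^2)u_{\mathrm{inh}}$ solving the $\delta$-source problem via the representation (\ref{pp-rep}), and mollify at the end to cover distributional $h=\dot f$---but the two technical cores are executed differently. (Minor point: your direction labels are swapped; what you call ``$\Leftarrow$'' is the paper's part~(\textit{i}), namely $h\in L^2\Rightarrow(u,\dot u)\in C([0,T],{\cal X})$.) For that direction the paper expands $\big\|\int_0^t\sin(\omega(t-s))\omega^{-1}h(s)\,ds\big\|^2$ as a double time-integral, isolates the diagonal contribution, and estimates the remainder through the oscillatory integral $F(z)=\int_\R(e^{iz\sqrt{r^2+m^2}}-e^{izr})\,dr$ by hand; your Plancherel shortcut---bounding the time-integral pointwise by $|\widehat{h\chi_{[0,t]}}(\omega)|$ and substituting $r\mapsto\omega$ so that $r^2\,dr/\omega^2\le d\omega$---delivers the same estimate in one line. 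For the converse, the paper controls the Bessel cross-term in $\|v_\tau(t)\|^2$ by Cauchy--Schwarz to obtain $(1-C_0(t-\tau)^{3/2})\|h\|_{L^2([\tau,t])}^2\le C\,M_T$, valid only on short intervals, and then patches together finitely many subintervals of $[0,T]$; your reformulation $(I-K_t)h=\tilde\rho$ as a Volterra equation with bounded kernel, inverted by Neumann series, yields the global bound $\|h\|_{L^2([0,T])}\le C(T)\|v(\cdot,T)\|_{L^2}$ directly, with no time-stepping. Both arguments ultimately rest on the same pointwise bound for $J_1(z)/z$, and the closing mollification is handled the same way.
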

\begin{proof} 
It  suffices to consider  $y=0$.
\smallskip\\
{\it i)}. Suppose that $h\in L^2([0,T])$.
We  represent $u(x,t)$ as the sum $u(x,t)=u_1(x,t)+u_2(x,t)$,
where $u_1(x,t)$ is a solution to the free Klein-Gordon equation 
with the initial data $u_0,\dot u_0$, and
$u_2(x,t)$ is a solution to (\ref{KGG}) with zero initial data. 
Evidently, $(u_1(t),\dot u_1(t))\in C([0,\infty),{\cal X})$.
Let us prove that
\begin{equation}\label{u2}
(u_2(t),\dot u_2(t))\in C([0,T],{\cal X}).
\end{equation}
Applying the Fourier  transform, we obtain
\[
(|k|^2+m^2)\,\tilde{u_2}(k,t)=
\int_0^t\frac{\sin((t-s)\sqrt{|k|^2+m^2})}{\sqrt{|k|^2+m^2}} h(s)ds,\quad
\sqrt{|k|^2+m^2}\,\tilde{\dot u_2}(k,t)=
\int_0^t\frac{\cos((t-s)\sqrt{|k|^2+m^2})}{\sqrt{|k|^2+m^2}} h(s)ds.
\]
Hence, for (\ref{u2}) it suffices to verify that for any $t\in [0,T]$,
\begin{eqnarray}\nonumber
\big\Vert\int_0^t\frac{\sin((t-s)\sqrt{|k|^2+m^2})}{\sqrt{|k|^2+m^2}} h(s)ds\big\Vert
&\le& C_1(t)\Vert h\Vert_{L^2([0,t])},\\
\label{uu2}
\big\Vert\int_0^t\frac{\cos((t-s)\sqrt{|k|^2+m^2})}{\sqrt{|k|^2+m^2}} h(s)ds\big\Vert
&\le& C_2(t)\Vert h\Vert_{L^2([0,t])}.
\end{eqnarray}
The both integrals are estimated in the same way, and we consider the first integral only.
We have 
\begin{eqnarray}\nonumber
&&\!\!\!\!\!\!\!\!\!\!\!\!\!\!\!\!\!\!\!\!\!\!\!\!\!\!\!\!\!\!\!\!\!
\big\Vert\int_0^t\frac{\sin((t-s)\sqrt{|k|^2+m^2})}{\sqrt{|k|^2+m^2}}h(s)ds\big\Vert^2\\
\nonumber
&=&4\pi\,\lim_{R\uparrow\infty}
\int_0^t\int_0^tds\,ds'\,\bar {h}(s) h(s')\int_0^R
dr\,\frac{r^2\sin((t-s)\sqrt{r^2+m^2})\,\sin((t-s')\sqrt{r^2+m^2})}{r^2+m^2}\\
\nonumber
&=&4\pi\,\lim_{R\uparrow\infty}
\int_0^t\int_0^tds\,ds'\,\bar {h}(s) h(s')\int_0^R
dr\,\sin((t-s)\sqrt{r^2+m^2})\,\sin((t-s')\sqrt{r^2+m^2})\\
\label{dot}
&-&4\pi m^2\,\int_0^t\int_0^tds\,ds'\,\bar {h}(s) h(s')\int_0^\infty
dr\,\frac{\sin((t-s)\sqrt{r^2+m^2})\,\sin((t-s')\sqrt{r^2+m^2})}{r^2+m^2}
=I_1(t)+I_2(t)
\end{eqnarray}
It is easy to prove that
\begin{equation}\label{I2}
|I_2(t)|\le Ct\Vert h\Vert_{L^2([0,t])}^2,\quad t\in [0,T].
\end{equation}
It remains to estimate the term $I_1$. We have 
\begin{eqnarray}\label{I1}
I_1(t)&=&\pi\,\lim_{R\uparrow\infty}
\int_0^t\int_0^tds\,ds'\,\bar {h}(s) h(s')\int_{-R}^Rdr
\Big(e^{i(s-s')\sqrt{r^2+m^2}}+e^{i(2t-s-s')\sqrt{r^2+m^2}}\Big)\\
\nonumber
&=&\pi\sqrt{2\pi}\int_0^t ds\,|h(s)|^2+
\pi\int_0^t\int_0^tds\,ds'\,\bar{h}(s)h(s')\big(F(s-s')+F(2t-s-s')\big),
\end{eqnarray}
where
\[
F(z)=\int_{-\infty}^\infty dr (e^{iz\sqrt{r^2+m^2}}-e^{izr}).
\]
Note that
\[
e^{iz\sqrt{r^2+m^2}}-e^{izr}=e^{izr}\Big(e^{izr(\sqrt{1+(\frac{m}{r})^2}-1)}-1\Big)
=e^{izr}\Big(\frac  {izm^2}{2r}+R(r,z)\Big),\quad |r|\ge 2m^2+1,
\]
where $|R(r,z)|\le \frac 14(1+|z|)^2m^4/r^2$.
Hence,
\begin{equation}\label{F-est}
|F(z)|\le |\int\limits_{|r|\le 2m^2+1}...|+|\int\limits_{|r|\ge 2m^2+1}...| 
\le C(m)(1+|z|)^2+|\int\limits_{|r|\ge 2m^2+1} \frac  {m^2}{2r}de^{izr}|
\le C_1(m)(1+|z|)^2.
\end{equation}
From (\ref{I1}) and (\ref{F-est}) it follows that
\begin{equation}\label{I11}
|I_1(t)|\le C(m)(1+t^3)\Vert h\Vert_{L^2([0,t])}^2,\quad t\in [0,T].
\end{equation}
Finally,   (\ref{dot}), (\ref{I2}) and (\ref{I11}) imply  (\ref{uu2}), and then  (\ref{u2}).
\smallskip\\
{\it  ii)}
Suppose now that  $(u(t),\dot u(t))\in C([0,T],{\cal X})$ and prove that $h\in L^2([0,T])$. 
As the first step, we will estimate $\Vert h\Vert_{L^2([0,T])}$ via 
\[
M_T:=\max\limits_{t\in [0,T]}\Vert (u(t),\dot u(t))\Vert_{\cal X},
\]
assuming that $\xi\in C^1([0,T])$.
For any fixed $\tau\in [0,T)$ and  $t\in [\tau, T]$, we
split $u(x,t)$ as $u(x.t)=u_{1,\tau}(x.t)+u_{2,\tau}(x.t)$,
where $u_{1,\tau}(x,t)$ is the solution to the free Klein-Gordon equation 
with  initial data $u(x,\tau),\dot u(x,\tau)$, and
$u_{2,\tau}(x,t)$ is the solution to (\ref{KGG}) with zero initial data at $t=\tau$.
By the energy conservation for the free   Klein-Gordon equation 
\[
\Vert (u_{1,\tau}(t),\dot u_{1,\tau}(t))\Vert_{\cal X}
\le C \Vert (u(\tau),\dot u(\tau))\Vert_{\cal X}\le CM_T,\quad t\in [\tau, T].
\]
Therefore,
\begin{equation}\label{u2-est}
\Vert (u_{2,\tau}(t),\dot u_{2,\tau}(t))\Vert_{\cal X}\le CM_T,\quad t\in [\tau, T].
\end{equation}
Since $g(x)=(-\Delta+m^2)^{-1}\delta(x)$, we have $(-\Delta+m^2)u_{2,\tau}(x,t)=v_\tau(x,t)$, 
where $v_\tau(t)\in C([\tau,T],L^2(\R^3))$ is the unique solution to
$$
\ddot v(x,t)= (\Delta-m^2)v(x,t) +h(t)\delta(x)
$$ 
with zero initial data at $t=\tau$. 
Estimate (\ref{u2-est}) implies
\begin{equation}\label{v-est}
\Vert v_\tau(t)\Vert\le CM_T,\quad t\in [\tau, T].
\end{equation}
Similarly  to (\ref{pp-rep})--(\ref{CP2}), we obtain
\begin{equation}\label{pp-rep2}
v_\tau(x,t)=\frac{\theta(t-\tau-|x|)}{4\pi|x|}h(t-|x|)
-\frac{m}{4\pi}p_\tau(|x|,t),\quad t\ge  \tau,
\end{equation}
where
\begin{equation}\label{pa}
p_\tau(r,t)=\int_\tau^t\frac{\theta(t-s-r)
J_1(m\sqrt{(t-s)^2-r^2})}{\sqrt{(t-s)^2-r^2}}h(s)ds,\quad r\ge 0.
\end{equation}
We have
\[
\Vert v_\tau(t)\Vert^2=\frac 1{4\pi}\int_0^{t-\tau}|h(t-r)|^2dr
+\frac{m^2}{4\pi}\int_0^{t-\tau} r^2|p_\tau(r,t)|^2dr
+\frac{m}{2\pi}\int_0^{t-\tau}h(t-r)p_\tau(r,t)r\,dr,\quad t\in [\tau,T].
\]
Therefore,
\begin{equation}\label{pp-int}
\Vert h\Vert_{L^2([\tau,t])}^2+2m\int_0^{t-\tau}h(t-r)p_\tau(r,t)r\,dr
\le 4\pi\Vert v_\tau(t)\Vert^2\le C_1M_T,\quad t\in [\tau,T].
\end{equation}
due to (\ref{v-est}), where $C_1$ does not depend on $\tau$ and $t$.
Applying the Cauchy-Schwarz inequality to (\ref{pa}),  we obtain
\[
|p_\tau(r,t)|\le C\Vert h\Vert_{L^2([\tau,t])}
\Big(\int_r^\infty\frac{J_1^2(m\sqrt{s^2-r^2})}{\sqrt{s^2-r^2}}ds\Big)^{1/2}.
\]
The properties of Bessel function $J_1$ (see \cite{O}) imply the uniform estimates
\begin{eqnarray*}
\int_r^\infty\frac{J_1^2(m\sqrt{s^2-r^2})}{\sqrt{s^2-r^2}}ds\le C,\quad 0\le r.
\end{eqnarray*}
Hence,
\[
|p_\tau(r,t)|
\le C\Vert h\Vert_{L^2([\tau,t])},\quad 0\le r,\quad t\in [\tau,T],
\]
and the Cauchy-Schwarz inequality imply 
\[
2m\int_0^{t-\tau}|h(t-r)p_\tau(r,t)|rdr
\le C_0(t-\tau)^{3/2}\Vert h\Vert_{L^2([\tau,t])}^2,
\]
where $C_0$ does not depend on $\tau$ and $t$.
Substituting the last inequality into (\ref{pp-int}), we obtain
\[
(1-C_0(t-\tau)^{3/2})\Vert h\Vert_{L^2([\tau,t])}^2\le C_1M_T,\quad t\in [\tau,T].
\]
Therefore, $h\in L^2([\tau,\tau+t_0])$, where $t_0=\min\{T-\tau,(2C_0)^{-2/3}\}$.
Since $0\le \tau<T$ is arbitrary, we can split 
the interval $[0,T]$ as 
\[
[0,T]=[0,t_0]\cup[t_0,2t_0]\cup...\cup[nt_0,T]
\]
and obtain that
\begin{equation}\label{dotxi-est}
\Vert h\Vert_{L^2([0,T])}\le C\max\limits_{t\in [0,T]}\Vert (u(t),\dot u(t))\Vert_{\cal X}.
\end{equation}
Finally, we should consider general case $\xi\in C([0,T])$.
In this case we define smooths approximations  $\xi_{\epsilon}(t)$:
\[
\xi_{\epsilon} (t)=\xi*\rho_{\epsilon} (t)=\int_0^T \rho_{\epsilon}(s)\xi(t-s)ds\in C^\infty([0,T]),
\]
where $\rho_{\epsilon}(s)=\frac{1}{\epsilon}\rho(\frac{s}{\epsilon})$, and $\rho(s)$ is a smooth function
with support in $[-1,0]$ such that $\int\rho(s)ds=1$.
Let $u_{\epsilon}(x,t)$ be a solution to  (\ref{KGG}) with $h_{\epsilon}(t)$
instead of $h(t)$. Then  we have
\[
\Vert h_{\epsilon}\Vert_{L^2([0,T])}
\le C\max\limits_{t\in [0,T]}\Vert (u_{\epsilon}(t),\dot u_{\epsilon}(t))\Vert_{\cal X}.
\]
Taking the limit as $\epsilon\to 0$, we obtain (\ref{dotxi-est}) in general case.
\end{proof}
\vspace{+3mm}
\begin{proposition}\label{KurasovKG}
Let $\psi_{f}(x,t)\in C([0,\infty),L^2(\R^3))$ be the unique  solution to  (\ref{CP1})
with initial data 
$\psi_0 \in D_y$ and $\pi_0\in\dot D_y$,
and let $\lambda(t)=\lim\limits_{x\to y}\psi_{f}(x,t)$.
Then $\dot\lambda\in L^2_{loc}([0,\infty))$. 
\end{proposition}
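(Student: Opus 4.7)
The plan is to reduce the problem to Lemma \ref{e-1} via the coupled linear dynamics of Lemma \ref{KurasovLin1}. Let $\psi$ be the strong solution to (\ref{CPL}) with the initial data $(\psi_0,\pi_0)$ supplied by Proposition \ref{KurasovLin}, and write $\psi(x,t)=\psi_{reg}(x,t)+\xi(t)g(x-y)$ with $\psi_{reg}\in C(\R,H^2(\R^3))$, $\dot\psi_{reg}\in C(\R,H^1(\R^3))$ and $\xi\in C^1([0,\infty))$. By Lemma \ref{KurasovLin1}, $\psi=\psi_f+\varphi$, so the free solution under consideration is recovered as $\psi_f=\psi-\varphi$, and $\lambda$ enters the integro-differential equation (\ref{zeta-sol}) governing $\xi$.

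The crucial observation is that $\psi_{reg}$ itself satisfies a forced Klein--Gordon equation with a $g$-type source. Substituting $\psi=\psi_{reg}+\xi g$ into $\ddot\psi=(\Delta-m^2)\psi_{reg}$ produces, in the sense of distributions,
\[
\ddot\psi_{reg}(x,t)=(\Delta-m^2)\psi_{reg}(x,t)-\ddot\xi(t)\,g(x-y),\quad \psi_{reg}(\cdot,0)=\psi_{0,reg},\ \dot\psi_{reg}(\cdot,0)=\pi_{0,reg}\in{\cal X}.
\]
This is exactly the setting of Lemma \ref{e-1} with $u=\psi_{reg}$ and $h=-\ddot\xi=\frac{d}{dt}(-\dot\xi)$, the latter being the distributional derivative of $-\dot\xi\in C([0,T])$ for every $T>0$. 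Since $(\psi_{reg},\dot\psi_{reg})\in C([0,T],{\cal X})$, applying the ``necessary'' direction of Lemma \ref{e-1} (part (ii) of its proof) gives $\ddot\xi\in L^2_{loc}([0,\infty))$.

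To conclude, I would rearrange (\ref{zeta-sol}) into the explicit representation
\[
\lambda(t)=\xi(t)+\frac{1}{4\pi}\bigl(\dot\xi(t)-m\xi(t)\bigr)+\frac{m}{4\pi}\int_0^t\frac{J_1(m(t-s))}{t-s}\xi(s)\,ds,
\]
and differentiate in $t$. The only term not manifestly continuous is $\frac{1}{4\pi}\ddot\xi(t)$, which lies in $L^2_{loc}$ by the preceding paragraph. The remaining terms are continuous since $\xi,\dot\xi\in C$ and, because $r\mapsto J_1(mr)/r$ extends smoothly to $r=0$ with value $m/2$, differentiation of the convolution produces only a continuous function of $t$. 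This yields $\dot\lambda\in L^2_{loc}([0,\infty))$.

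I expect the main obstacle to be the middle step: one must verify that the distributional equation for $\psi_{reg}$ genuinely fits the hypothesis of Lemma \ref{e-1} when the forcing $h$ is merely the distributional derivative of a continuous function, so that the converse implication of that lemma can be invoked to upgrade the regularity of $\xi$ from the mere $C^1$ supplied by Proposition \ref{KurasovLin} to one with $\ddot\xi\in L^2_{loc}$.
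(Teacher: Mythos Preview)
Your argument is correct and follows essentially the same route as the paper: derive the forced equation $\ddot\psi_{reg}=(\Delta-m^2)\psi_{reg}-\ddot\xi\,g(\cdot-y)$, invoke the converse direction of Lemma \ref{e-1} (with $f=-\dot\xi\in C$) to obtain $\ddot\xi\in L^2_{loc}$, and then read off $\dot\lambda\in L^2_{loc}$ from (\ref{zeta-sol}). The only difference is that you spell out the differentiation of the convolution term, whereas the paper simply cites (\ref{zeta-sol}); your concern about the applicability of Lemma \ref{e-1} is already addressed by its hypothesis that $h$ be the distributional derivative of a continuous function.
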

\begin{proof}
Substituting  (\ref{sol2}) into  (\ref{CPL}), we obtain
\begin{eqnarray}\nonumber
&&\ddot\psi_{reg}(x,t)=(\Delta-m^2)\psi_{reg}(x,t)-\ddot\xi(t) g(x-y),\\
\nonumber
&&\psi_{reg}(x,0)=\psi_{0,reg},\quad \dot\psi_{reg}(x,0)=\pi_{0,reg}.
\end{eqnarray}
Note that $\xi\in C^1(\R)$ since 
$\|\dot\psi(t)|^2_{\dot D_y}:=\|\dot\psi_{reg}(t)\|_{H^1(\R^3)}^2+|\dot\xi(t)|^2$.
Lemma \ref{e-1} implies that $\ddot\xi\in L^2_{loc}([0,\infty))$.
Then $\dot\lambda\in L^2_{loc}([0,\infty))$ by (\ref{zeta-sol}).
\end{proof}
\begin{corollary}\label{imp}(cf. \cite[Lemma 3.4]{NP}).
Let $u(x,t)$ be the solution to the free Klein-Gordon equation with regular initial data
$(u_0,\dot u_0)\in {\cal X}$. Then for all $y\in\R^3$
\begin{equation}\label{new_pr}
\dot u(y,t)\in L^2_{loc}([0,\infty)).
\end{equation}
\end{corollary}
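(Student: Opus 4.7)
The plan is to deduce Corollary \ref{imp} directly from Proposition \ref{KurasovKG} by absorbing the regular datum $u_0$ into the point-interaction framework. Given $(u_0,\dot u_0) \in {\cal X}$, the Sobolev embedding $H^2(\R^3) \subset C(\R^3)$ makes the pointwise value $u_0(y)$ well-defined. I would therefore set $\xi_0 := u_0(y)$ and $\dot\xi_0 := 0$, and form the auxiliary initial data
\[
\psi_0(x) := u_0(x) + \xi_0\, g(x-y), \qquad \pi_0(x) := \dot u_0(x).
\]
The identity $\psi_{0,reg}(y) = u_0(y) = \xi_0$ places $\psi_0$ in $D_y$, and since $\dot D_y$ imposes no pointwise condition, $\pi_0 \in \dot D_y$ (with $\dot\xi_0 = 0$).

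Let $\psi_f$ denote the solution of the free Klein-Gordon Cauchy problem (\ref{CP1}) with this data. By linearity one splits $\psi_f = \psi_{f,reg} + \psi_{f,y}$ exactly as in the proof of Lemma \ref{KurasovLin1}, where $\psi_{f,reg}$ is the free evolution of the regular pair $(u_0,\dot u_0)$ and $\psi_{f,y}$ is the free evolution of the singular pair $(\xi_0 g(x-y),0)$. By uniqueness $\psi_{f,reg} = u$, so passing to the limit $x\to y$ gives
\[
\lambda(t) \;=\; u(y,t) + \lambda_y(t),
\]
where $\lambda_y$ is provided in closed form by formula (\ref{lamG}) specialized to $\dot\xi_0 = 0$.

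Proposition \ref{KurasovKG} immediately yields $\dot\lambda \in L^2_{loc}([0,\infty))$, so it suffices to verify that $\dot\lambda_y \in L^2_{loc}([0,\infty))$, after which $\dot u(y,\cdot) = \dot\lambda - \dot\lambda_y$ lies in $L^2_{loc}([0,\infty))$ as required (the time derivative of $u(y,\cdot)$ being understood in the distributional sense, since $u(y,\cdot)$ is continuous by the $H^2$-regularity of $u(\cdot,t)$). This last regularity of $\lambda_y$ is transparent from (\ref{lamG}): the kernel $J_1(m\tau)/\tau$ extends to an entire even function of $\tau$ (with value $m/2$ at the origin), so the convolution in (\ref{lamG}) is smooth in $t$ and $\dot\lambda_y$ is in fact continuous on $[0,\infty)$. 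The only conceptual point is the choice of auxiliary singular datum $\xi_0 g(x-y)$ with $\xi_0 = u_0(y)$; once it is made, no serious obstacle remains.
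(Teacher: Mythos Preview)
Your proof is correct and follows essentially the same route as the paper: you build auxiliary data $\psi_0=u_0+\xi_0 g(\cdot-y)\in D_y$, $\pi_0\in\dot D_y$ with $\xi_0=u_0(y)$, identify $u$ with $\psi_{f,reg}$ so that $u(y,t)=\lambda(t)-\lambda_y(t)$, invoke Proposition~\ref{KurasovKG} for $\dot\lambda$, and read off $\dot\lambda_y\in L^2_{loc}$ from the explicit formula~(\ref{lamG}). The only cosmetic difference is that the paper leaves $\dot\xi_0$ arbitrary while you set $\dot\xi_0=0$, which is harmless.
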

\begin{proof}
Consider the Cauchy problem (\ref{CPL}) with initial data 
$\psi_0(x)=u_0(x)+\xi_0g(x-y)$, $\pi_0(x)=\dot u_0(x)+\dot\xi_0g(x-y)$,
where $\xi_0=u_0(y)$ and $\dot\xi_0$ is arbitrary.
Then in notation of Lemma \ref{KurasovLin1} we have $u(x,t)=\psi_{f,reg}(x,t)$ and
$u(y,t)=\lambda_{reg}(t)=\lambda(t)-\lambda_y(t)$, where $\lambda_y(t)$ is defined by (\ref{lamG}).
Evidently, $\dot\lambda_y(t)\in L^2_{loc}([0,\infty))$. Then Proposition \ref{KurasovKG} implies (\ref{new_pr}).
\end{proof}
\section{Free Klein-Gordon equation}
\label{KGD-sect}
Here we introduce some notations and obtain some formulas which we will use below.
We consider the free Klein-Gordon equation
\[
\ddot \psi_f(x,t)=(\Delta-m^2)\psi_f(x,t)
\]
with initial data
\[
\psi_0=\psi_{0,reg}+\sum\limits_{1\le k\le n}\zeta_{0k} g_k,\quad
\pi_0=\pi_{0,reg}+\sum\limits_{1\le k\le n}\dot\zeta_{0k} g_k.
\]
We  split  $\psi_f(x,t)$ as
\[
\psi_f(x,t)=\psi_{f,reg}(x,t)+\sum\limits_{1\le k\le n}\psi_{f,k}(x,t),
\]
where  $\psi_{f,reg}$ is the solution to the free Klein-Gordon equation with regular initial data
$\psi_{0,reg}$, $\pi_{0,reg}$, and $\psi_{f,k}$ are the solutions to the free Klein-Gordon equation 
with initial data $\zeta_{0k} g_k$, $\dot\zeta_{0k} g_k$.
Denote
\begin{equation}\label{lamregj}
\lambda_{j,reg}:=\psi_{f,reg}(y_j,t)\in C([0,\infty)),
\qquad\lambda_{j,k}(t)=\lim_{x \to y_j}\,\psi_{f,k}(x,t)
\end{equation}
Due to Corollary \ref{imp} 
\begin{equation}\label{dotlamreg}
\dot\lambda_{j,reg}(t)\in L^2_{loc}([0,\infty)).
\end{equation}
Further, we get  similarly to (\ref{psify})
\begin{eqnarray*}
\psi_{f,k}(x,t)&=&-\frac{\theta(t-|x-y_k|)(\zeta_{0k}+(t-|x-y_k|)\dot\zeta_{0k})}{4\pi|x-y_k|}
+\frac{(\zeta_{0k}+t\dot\zeta_{0k})e^{-m|x-y_k|}}{4\pi|x-y_k|}\\
&+&\frac{m}{4\pi}\int_0^t\frac{\theta(t-s-|x-y_k|)
J_1(m\sqrt{(t-s)^2-|x-y_k|^2})}{\sqrt{(t-s)^2-|x-y_k|^2}}(\zeta_{0k}+s\dot\zeta_{0k})ds,\quad t\ge 0.
\end{eqnarray*}
Therefore, for  $j\not=k$ there exist 
\begin{eqnarray}\nonumber
\lambda_{j,k}(t)&=&\lim_{x \to y_j}\,\psi_{f,k}(x,t)=\psi_{f,k}(y_j,t)=
-\frac{\theta(t-|y_j-y_k|)(\zeta_{0k}+(t-|y_j-y_k|)\dot\zeta_{0k})}{4\pi|y_j-y_k|}
+(\zeta_{0k}+t\dot\zeta_{0k})g_{jk}\\
\label{lamG1}
&+&\frac{m}{4\pi}\int_0^t\frac{\theta(t-s-|y_j-y_k|)
J_1(m\sqrt{(t-s)^2-|y_j-y_k|^2})}{\sqrt{(t-s)^2-|y_j-y_k|^2}}(\zeta_{0k}+s\dot\zeta_{0k})ds
\in C([0,\infty)).
\end{eqnarray}
Moreover, for any $t>0$ there exist (cf. (\ref{lamG}))
\begin{equation}\label{lamjj}
\lambda_{j,j}(t)=\lim_{x \to y_j}\,\psi_{f,j}(x,t)=
-\frac{m(\zeta_{0j}+t\dot\zeta_{0j})-\dot\zeta_0}{4\pi}
+\frac{m}{4\pi}\int_0^t \frac{J_1(m(t-s))}{(t-s)}(\zeta_{0j}+s\dot\zeta_{0j})ds.
\end{equation}
We set
\begin{equation}\label{lamjj0}
\lambda_{j,j}(0)=\lim_{t \to 0}\lambda_{j,j}(t)=-\frac{m\zeta_{0j}-\dot\zeta_{0j}}{4\pi}.
\end{equation}
Thus, (\ref{lamreg})--(\ref{lamjj0}) imply that
\begin{equation}\label{lamj}
\lambda_j(t):=\lim_{x \to y_j}\,\psi_{f}(x,t)=
\lambda_{j,reg}(t)+\sum\limits_{1\le k\le n}\lambda_{k,j}(t)\in C([0,\infty)).
\end{equation}
\begin{remark}\label{rm1}
{\it Evidently,
\begin{equation}\label{dotlamjj}
\dot\lambda_{j,j}(t)\in L^2_{loc}([0,\infty)).
\end{equation}
Nevertheless, $\dot\lambda_{k,j}(t)\not\in L^2_{loc}([0,\infty))$ for $k\not=j$, because $\dot\lambda_{k,j}(t)$ contains  the term}
\begin{equation}\label{mujj}
-\frac{\dot\theta(t-|y_j-y_k|)(\zeta_{0k}+(t-|y_j-y_k|)\dot\zeta_{0k})}{4\pi|y_j-y_k|}
=-\frac{\delta(t-|y_j-y_k|)\zeta_{0k}}{4\pi t}.
\end{equation}
\end{remark}
\section{Nonlinear point interaction}
\label{nonlin-sect}
First we adjust the nonlinearity $F$ so that it becomes Lipschitz continuous.
Define
\begin{equation}\label{Lambda}
\Lambda(\Psi_0)=\sqrt{({\cal H}_F(\Psi_0)+a)/b},
\end{equation}
where $\Psi_0\in {\cal D}_F$ is the initial data from Theorem \ref{theorem-well-posedness} 
and $a$, $b$ are constants from (\ref{bound-below}).
Then we may pick a modified potential function
$\tilde U(\zeta)\in C^2(\C,\R)$, so that\\
i) the identity holds
\begin{equation}\label{Lambda1}
\tilde U(\zeta)= U(\zeta),\quad |\zeta|\le\Lambda(\Psi_0),
\end{equation}
ii) $\tilde U(\zeta)$ satisfies (\ref{bound-below}) with the same constant  $a$, $b$ as $U(\zeta)$ does:
\begin{equation}\label{Lambda2}
\tilde U(\zeta)-{\cal G}(\zeta)\ge b|\zeta|^2 -a,\quad \zeta\in\C,
\end{equation}
iii) the function $\tilde F=\pa_{\ov\zeta}\tilde U(\zeta)$ is Lipschitz continuous:
\begin{equation}\label{Lambda22}
|\tilde F(\zeta_1)-\tilde F(\zeta_2)|\le C|\zeta_1-\zeta_2|,\quad\zeta_1,\zeta_2\in\C.
\end{equation}
We suppose that $\Psi_0=(\psi_0,\pi_0)\in {\cal D}_{\tilde F}=D_{\tilde F}\oplus\dot D$,
and  consider the Cauchy problem for (\ref{KG})) with the modified nonlinearity $\tilde F$.
As before we denote by $\psi_f(x,t)\in C([0,\infty),L^2(\R^3)$  the unique solution to  (\ref{CP1}),
and $\lambda_j(t)$, $t\ge 0$ is defined by (\ref{lamj}).
The following lemma is proved by standard argument from the contraction mapping principle.
\begin{lemma}\label{LLWP}
Let conditions  (\ref {Lambda1})-(\ref {Lambda22}) be satisfies. 
Then there exists $\tau>0$ such that  the Cauchy problem
\begin{eqnarray}\nonumber
&&\frac {1}{4\pi}\dot\zeta_j(t)=\frac {m}{4\pi}\zeta_j(t)
+\sum\limits_{k\not =j}\frac{\theta(t-|y_j-y_k|)\zeta_k(t-|y_j-y_k|)}{4\pi|y_j-y_k|}+\lambda_j(t)\\
\label{delay}
&&-\sum\limits_{1\le k\le n}\frac{m}{4\pi}
\int_0^t\frac{\theta(t-s-|y_j-y_k|)J_1(m\sqrt{(t-s)^2-|y_j-y_k|^2})}{\sqrt{(t-s)^2-|y_j-y_k|^2}}\zeta_k(s)ds
-\tilde F_j(\zeta(t)),\quad \zeta_j(0)=\zeta_{0j}
\end{eqnarray}
has a unique solution $\zeta\in C([0,\tau])$.
\end{lemma}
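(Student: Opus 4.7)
The plan is to convert (\ref{delay}) into a fixed-point equation and apply the Banach contraction principle in $C([0,\tau],\C^n)$ with the sup norm. Integrating (\ref{delay}) from $0$ to $t$ and multiplying by $4\pi$ gives $\zeta = T\zeta$, where
\[
(T\zeta)_j(t):=\zeta_{0j}+\int_0^t\Phi_j(\zeta)(s)\,ds,
\]
and $\Phi_j(\zeta)(s)$ collects the linear term $m\zeta_j(s)$, the delay sum $\sum_{k\ne j}\theta(s-|y_j-y_k|)\zeta_k(s-|y_j-y_k|)/|y_j-y_k|$, the source $4\pi\lambda_j(s)$, the Bessel convolution, and $-4\pi\tilde F_j(\zeta(s))$. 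I would work in the closed ball $B_R=\{\zeta\in C([0,\tau],\C^n):\|\zeta-\zeta_{0}\|_\infty\le R\}$ around the constant $\C^n$-valued function $\zeta_{0}=(\zeta_{01},\ldots,\zeta_{0n})$, and show that for $\tau$ sufficiently small $T$ maps $B_R$ into itself and is a contraction.

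For the self-mapping step, $\|T\zeta-\zeta_{0}\|_\infty\le\tau\sup_{s\in[0,\tau],\,j}|\Phi_j(\zeta)(s)|$, and each piece of $\Phi_j$ is uniformly bounded on $[0,\tau]$ when $\zeta\in B_R$: $\lambda_j\in C([0,\infty))$ by (\ref{lamj}); the delay sum is controlled by $(R+|\zeta_{0}|)/\min_{j\ne k}|y_j-y_k|$; the Bessel kernel $\theta(u-r)J_1(m\sqrt{u^2-r^2})/\sqrt{u^2-r^2}$ is locally bounded in $u$ since $J_1(z)/z\to 1/2$ as $z\to 0$ and $|J_1(z)|\le Cz^{-1/2}$ at infinity; and $\tilde F_j$ is Lipschitz by (\ref{Lambda22}), hence bounded on bounded sets. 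Choosing $\tau$ small forces $\|T\zeta-\zeta_{0}\|_\infty\le R$.

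For the contraction step, the difference $\Phi_j(\zeta^1)(s)-\Phi_j(\zeta^2)(s)$ is pointwise dominated by a constant times $\|\zeta^1-\zeta^2\|_\infty$ for $\zeta^1,\zeta^2\in B_R$: the linear and delay terms trivially, the Bessel convolution via the $L^1([0,\tau])$ norm of its kernel, and $\tilde F$ by (\ref{Lambda22}). Integrating yields $\|T\zeta^1-T\zeta^2\|_\infty\le L\tau\|\zeta^1-\zeta^2\|_\infty$ with $L$ independent of $\tau$; taking $\tau$ so small that $L\tau<1$ and the self-mapping bound also holds makes $T$ a strict contraction on $B_R$, and the Banach fixed-point theorem delivers the unique $\zeta\in C([0,\tau],\C^n)$ solving (\ref{delay}).

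The only mildly subtle input is the Bessel-kernel bound, which is really a standard consequence of the $J_1$ asymptotics already cited through \cite{O} in the proof of Lemma \ref{e-1}; the remaining steps are routine Picard iteration, as hinted in the statement.
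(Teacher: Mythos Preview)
Your proposal is correct and follows exactly the approach the paper indicates: the paper itself does not give a detailed argument but simply states that the lemma ``is proved by standard argument from the contraction mapping principle,'' which is precisely the Picard iteration you carry out. One minor simplification worth noting is that for $\tau<\min_{j\ne k}|y_j-y_k|$ the delay terms vanish identically on $[0,\tau]$, so you need not even bound them.
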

Denote 
$$
\psi_j(t,x):=\frac{\theta(t-|x-y_j|)}{4\pi|x-y_j|}\zeta_j(t-|x-y_j|)-\frac{m}{4\pi}
\int_0^t\frac{\theta(t-s-|x-y_j|)
J_1(m\sqrt{(t-s)^2-|x-y_j|^2})}{\sqrt{(t-s)^2-|x-y_j|^2}}\zeta_j(s)ds, \quad t\in [0,\tau],
$$
with $\zeta_j$ from Lemma \ref{LLWP}.
Now we establish the local well-posedeness for (\ref{iKG}).
\begin{proposition}\label{TLWP}(Local well-posedeness).
Let  the conditions  (\ref {Lambda1})--(\ref {Lambda22}) hold.
Then the function
\[
\psi(x,t):= \psi_f(x,t)+\sum\limits_{1\le j\le n}\psi_j(x,t)\in D_{\tilde F}, \quad t\in [0,\tau]
\]
is a unique strong solution to the Cauchy problem  
\begin{equation}\label{CP}
\ddot{\psi}(t) = (\Delta-m^2)\psi(t)+\sum\limits_{1\le j\le n}\zeta_j(t) g_j,\quad \psi(0) = \psi_0,\quad\dot\psi(0)  =  \pi_0,
\end{equation}
\begin{equation}\label{bc}
\lim_{x \to y_j}\, \left( \psi(x,t)-\zeta_j(t) g_j\right)=\tilde F_j(\zeta(t)),
\end{equation}
and
\[
\dot\psi(t)\in \dot D,~~\quad t\in [0,\tau].
\]
\end{proposition}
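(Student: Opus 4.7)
The plan is to verify that the explicit candidate $\psi=\psi_f+\sum_j\psi_j$ fulfills (\ref{CP}), (\ref{bc}), and membership in $D_{\tilde F}\oplus\dot D$, and then deduce uniqueness from Lemma \ref{LLWP}.

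The PDE (\ref{CP}) is immediate: by the same direct calculation as (\ref{CP2}) in the proof of Lemma \ref{KurasovLin1}, each $\psi_j$ solves $\ddot\psi_j=(\Delta-m^2)\psi_j+\zeta_j(t)\delta(x-y_j)$ with zero Cauchy data, while $\psi_f$ carries the initial data. For the boundary condition (\ref{bc}) I pass to $x\to y_j$ inside $\psi(x,t)-\zeta_j(t)g_j(x)$ term by term: the contribution from $\psi_f$ tends to $\lambda_j(t)$; each $\psi_k$ with $k\ne j$ is continuous at $y_j$ and reproduces the delay plus Bessel terms already appearing in (\ref{delay}); and the remaining piece $\psi_j-\zeta_j(t)g_j$ is handled by the same singular-part cancellation as in (\ref{lim_z}) (using $\zeta_j\in C^1$, which follows from (\ref{delay}) once one verifies that the jumps at $t=|y_j-y_k|$ in $\lambda_j$ and in the retarded term $\theta(t-|y_j-y_k|)\zeta_k(t-|y_j-y_k|)/(4\pi|y_j-y_k|)$ cancel), producing $-(4\pi)^{-1}(\dot\zeta_j-m\zeta_j)-(m/4\pi)\int_0^tJ_1(m(t-s))(t-s)^{-1}\zeta_j(s)\,ds$; summing and invoking (\ref{delay}) collapses the result to $\tilde F_j(\zeta(t))$.

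For the regularity claim, I set $\psi_{reg}:=\psi-\sum_j\zeta_j(t)g_j$. Since $(-\Delta+m^2)g_j=\delta_{y_j}$, the point sources in (\ref{CP}) cancel and $\psi_{reg}$ satisfies
\[
\ddot\psi_{reg}=(\Delta-m^2)\psi_{reg}-\sum_j\ddot\zeta_j(t)g_j,\qquad \psi_{reg}(0)=\psi_{0,reg}\in H^2,\quad \dot\psi_{reg}(0)=\pi_{0,reg}\in H^1.
\]
Provided $\ddot\zeta_j\in L^2_{loc}([0,\tau])$, Lemma \ref{e-1} applied at each $y_j$ and summed then yields $(\psi_{reg},\dot\psi_{reg})\in C([0,\tau],{\cal X})$; combined with the boundary-value identity already established, this places $\psi(t)$ in $D_{\tilde F}$ via (\ref{q1}) and $\dot\psi(t)$ in $\dot D$.

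The bound $\ddot\zeta_j\in L^2_{loc}$ is the technical crux and the step I expect to be the main obstacle: differentiating (\ref{delay}) in $t$ produces a Dirac mass $\delta(t-|y_j-y_k|)\zeta_{0k}/(4\pi|y_j-y_k|)$ from the derivative of the retarded term, which cancels exactly the opposite Dirac contribution in $\dot\lambda_j$ singled out in Remark \ref{rm1}; the remainder is in $L^2_{loc}$ by Corollary \ref{imp} for $\dot\lambda_{j,reg}$, by (\ref{dotlamjj}) for $\dot\lambda_{j,j}$, by boundedness of the Bessel integrals, and by the Lipschitz bound (\ref{Lambda22}) on $\tilde F$. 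Uniqueness then follows from Lemma \ref{LLWP}: for any other strong solution $\tilde\psi$, the limit computation above forces its coefficients $\tilde\zeta_j$ to satisfy (\ref{delay}), so $\tilde\zeta=\zeta$ and hence $\tilde\psi=\psi$.
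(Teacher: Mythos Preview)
Your argument is essentially the paper's own: verify (\ref{CP}) directly, recover (\ref{bc}) by passing to the limit term by term and collapsing via (\ref{delay}), derive the $\psi_{reg}$--equation and apply Lemma \ref{e-1} once you know $\ddot\zeta_j\in L^2_{loc}$, the latter coming from the $\delta$--cancellation flagged in Remark \ref{rm1} together with (\ref{dotlamreg}) and (\ref{dotlamjj}). Your only slip is the last line on uniqueness: from $\tilde\zeta=\zeta$ alone one does not yet have $\tilde\psi=\psi$; the paper closes this by setting $\tilde\psi_f:=\tilde\psi-\sum_j\psi_j$, checking that it solves the free Klein--Gordon Cauchy problem (\ref{CP1}), and invoking uniqueness there---you should add that step.
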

\begin{proof}
Since $\zeta_j(t)$ solves (\ref{delay}) one has similarly to (\ref{lim_z})
\begin{eqnarray}\nonumber
\lim_{x \to y_j}\, ( \psi(t,x)\!-\!\zeta_j(t) g_j(x))
\!\!\!\!\!\!\!\!\!\!&&\!\!\!=\lambda_j(t)+
\lim_{x\to y_j}\big(\sum\limits_{1\le k\le n}\frac{\theta(t-|x-y_k|)\zeta_k(t-|x-y_k|)}{4\pi|x-y_k|}
-\frac{\zeta_j(t)e^{-m|x-y_j|}}{4\pi|x-y_j|}\big)\\
\nonumber
&&\!\!\!-\lim_{x\to y_j}\sum\limits_{1\le k\le n}\frac{m}{4\pi}
\int_0^t\frac{\theta(t-s-|x-y_k|)J_1(m\sqrt{(t-s)^2-|x-y_k|^2})}{\sqrt{(t-s)^2-|x-y_k|^2}}\zeta_k(s)ds\Big)
\\
\nonumber
&&\!\!\!=\lambda_j(t)-\frac{1}{4\pi}(\dot\zeta_j(t)-m\zeta_j(t))
+\sum\limits_{k\not =j}\frac{\theta(t-|y_j-y_k|)\zeta_k(t-|y_j-y_k|)}{4\pi|y_j-y_k|}\\
\label{lim_zeta}
&&\!\!\!-\!\sum\limits_{1\le k\le n}\frac{m}{4\pi}
\int_0^t\frac{\theta(t-s-|y_j-y_k|)J_1(m\sqrt{(t-s)^2-|y_j-y_k|^2})}{\sqrt{(t-s)^2-|y_j-y_k|^2}}\zeta_k(s)ds
=\tilde F_j(\zeta(t))
\end{eqnarray}
and hence (\ref{bc}) are satisfied.  
Further,
$$
\ddot\psi=\ddot\psi_f+\sum\limits_{1\le j\le n}\ddot\psi_j=(\Delta-m^2)\psi_f
+\sum\limits_{1\le j\le n}(\Delta-m^2)\psi_j+\sum\limits_{1\le j\le n}\zeta_j\delta(\cdot-y_j)
=(\Delta-m^2)\psi+\sum\limits_{1\le j\le n}\zeta_j\delta(\cdot-y_j)
$$
and $\psi$ solves (\ref{CP}) then. 
Finally, let us prove that
\begin{equation}\label{psipsi}
(\psi_{reg}(t),\dot\psi_{reg}(t))\in {\cal X},\qquad~~~t\in [0,\tau].
\end{equation}
The function  $\psi_{reg}(x,t)=\psi(x,t)-\sum\limits_{1\le j\le n}\zeta_j(t) g_j(x)$ is a solution to 
\[
\ddot\psi_{reg}(x,t)=(\Delta-m^2)\psi_{reg}(x,t)-\sum\limits_{1\le j\le n}\ddot\zeta_j(t) g_j(x)
\]
with regular initial data $(\psi_{0,reg},\pi_{0,reg})\in {\cal X}$.
Due to (\ref{dotlamreg}), (\ref{lamj}), and (\ref{dotlamjj}) the  derivative with respect to $t$ of the RHS of
(\ref{delay}) belong to $L^2([0,\tau])$, since the terms with $\delta$-functions
cancel each other.
Hence,  $\ddot\zeta_j\in L^2([0,\tau])$, and  (\ref{psipsi}) holds by Lemma \ref{e-1}.

Suppose now that $\tilde\psi=\tilde\psi_{reg}+\sum\limits_{1\le j\le n}\tilde\zeta_j g_j$
is another strong solution of (\ref{CP}). 
Then, by reversing the above argument, the boundary
conditions (\ref{bc}) imply that $\tilde\zeta_j$ solves the Cauchy problem
(\ref{delay}). The uniqueness of the solution of (\ref{delay}) implies that
$\tilde\zeta_j=\zeta_j$. Then, defining
$$
\psi_j(t,x):=\frac{\theta(t-|x-y_j|)}{4\pi|x-y_j|}\zeta_j(t-|x-y_j|)-\frac{m}{4\pi}
\int_0^t\frac{\theta(t-s-|x-y_j|)J_1(m\sqrt{(t-s)^2-|x-y_j|^2})}
{\sqrt{(t-s)^2-|x-y_j|^2}}\zeta_j(s)ds, \quad t\in [0,\tau],
$$
for $\tilde\psi_f=\tilde\psi-\sum\limits_{1\le j\le n}\psi_j(t,x)$ one obtains
$$
\ddot{\tilde\psi}_f=(\Delta-m^2)\tilde\psi_{reg}-
\sum\limits_{1\le j\le n}((\Delta-m^2)\psi_j+\zeta_j\delta(\cdot-y_j))=
(\Delta-m^2)(\tilde\psi_{reg}-\sum\limits_{1\le j\le n}(\psi_j-\zeta_j g_j))=(\Delta-m^2)\tilde\psi_f\,,
$$
i.e $\tilde\psi_f$ solves the Cauchy problem (\ref{CP1}). 
Thus, by the uniqueness of the solution
(\ref{CP1}), $\tilde\psi_f=\psi_f$ and then $\tilde\psi=\psi$.
\end{proof}
\begin{lemma}\label{H-pr}
Let conditions (\ref{Lambda1})-(\ref{Lambda22}) hold, and let 
$\Psi(t)=(\psi(t),\dot\psi(t))\in {\cal D}_{\tilde F}$, $t\in [0,\tau]$, be a solution to
(\ref{CP})-(\ref{bc}). 
Then 
\begin{equation}\label{cHFT}
{\cal H}_{\tilde F}(\Psi(t))=\Vert\dot\psi(t)\Vert^2+\Vert\nabla\psi_{reg}(t)\Vert^2
+m^2\Vert\psi_{reg}(t)\Vert^2+\tilde U(\zeta(t))-{\cal G}(\zeta(t))=const,\quad t\in [0,\tau].
\end{equation}
\end{lemma}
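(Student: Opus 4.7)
The plan is to differentiate ${\cal H}_{\tilde F}(\Psi(t))$ in $t$ and show that the derivative vanishes. The key algebraic point will be that the boundary term produced by integrating by parts against the singular part $\sum_{j}\zeta_j g_j$ of $\psi$ cancels exactly the time derivatives of $\tilde U(\zeta)$ and $-{\cal G}(\zeta)$, thanks to the boundary condition (\ref{bc}) and to the symmetry and reality of $G$.

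First I would note that the proof of Proposition \ref{TLWP} yields $\ddot\zeta_j\in L^2([0,\tau])$, hence $\zeta\in C^1([0,\tau])$, and so $\tilde U(\zeta(t))-{\cal G}(\zeta(t))$ is a $C^1$ function of $t$. Using the equation $\ddot\psi=(\Delta-m^2)\psi_{reg}$, the decomposition $\dot\psi=\dot\psi_{reg}+\sum_{j}\dot\zeta_j g_j$, the identity $(-\Delta+m^2)g_j=\delta(\cdot-y_j)$, and integration by parts, I would obtain
\[
2\Re\langle\dot\psi,\ddot\psi\rangle
=-\frac{d}{dt}\bigl(\Vert\nabla\psi_{reg}\Vert^2+m^2\Vert\psi_{reg}\Vert^2\bigr)
-2\Re\sum_{1\le j\le n}\dot\zeta_j\,\overline{\psi_{reg}(y_j)}.
\]

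Next I would substitute the boundary condition (\ref{q1}), $\psi_{reg}(y_j)=\tilde F_j(\zeta)-\sum_{k}g_{kj}\zeta_k$, into the last sum, and use (\ref{FU}) together with $g_{jk}=g_{kj}\in\R$ to compute
\[
\frac{d}{dt}\tilde U(\zeta(t))=2\Re\sum_{1\le j\le n}\overline{\tilde F_j(\zeta)}\,\dot\zeta_j,\qquad
\frac{d}{dt}{\cal G}(\zeta(t))=2\Re\sum_{1\le j,k\le n}g_{jk}\,\dot\zeta_j\,\overline{\zeta_k}.
\]
Combining the three contributions, the terms involving $\tilde F_j$ cancel, and by symmetry of $G$ so do the quadratic terms, giving $\frac{d}{dt}{\cal H}_{\tilde F}(\Psi(t))=0$ as desired.

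The main obstacle I expect is not the algebra but the justification of the formal manipulations in the available regularity. The pairing $\langle g_j,(\Delta-m^2)\psi_{reg}\rangle=-\overline{\psi_{reg}(y_j)}$ requires $\psi_{reg}(t)\in H^2(\R^3)$ so that the point values at $y_j$ make sense; this is ensured by (\ref{psipsi}). Since $\dot\zeta_j$ is a priori only continuous on $[0,\tau]$, I would either interpret the chain of equalities pointwise in $t\in(0,\tau)$ after differentiation, or work with mollified coefficients $\zeta_\epsilon=\zeta*\rho_\epsilon$ as at the end of the proof of Lemma \ref{e-1} and pass to the limit $\epsilon\to 0$.
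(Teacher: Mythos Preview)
Your proof is correct and follows essentially the same route as the paper: differentiate $\Vert\dot\psi\Vert^2$ using $\ddot\psi=(\Delta-m^2)\psi_{reg}$, pair the singular part $\sum_j\dot\zeta_j g_j$ against $(\Delta-m^2)\psi_{reg}$ to produce the point values $\psi_{reg}(y_j)$, insert the boundary relation (\ref{q1}), and identify the resulting terms with $-\frac{d}{dt}\tilde U(\zeta)$ and $\frac{d}{dt}{\cal G}(\zeta)$. The only difference is cosmetic (you write $2\Re\langle\dot\psi,\ddot\psi\rangle$ where the paper keeps the two conjugate terms separately), and your closing paragraph on the regularity justification is more careful than the paper, which carries out the computation formally.
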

\begin{proof}
Definition (\ref{HF}) of operator $H_{\tilde F}$ implies
\begin{eqnarray*}
\frac{d}{dt}\Vert\dot\psi\Vert^2
&=&\langle H_{\tilde F}\psi,\dot\psi\rangle+\langle\dot\psi,H_{\tilde F}\psi\rangle
=\langle(\Delta-m^2)\psi_{reg},\dot\psi_{reg}+\sum\limits_{1\le j\le n}\dot \zeta_j g_j\rangle
+\langle\dot\psi_{reg}+\sum\limits_{1\le j\le n}\dot \zeta_j g_j,(\Delta-m^2)\psi_{reg}\rangle\\
&=&\langle(\Delta-m^2)\psi_{reg},\dot\psi_{reg}\rangle+\langle\dot\psi_{reg},(\Delta-m^2)\psi_{reg}\rangle
-\sum\limits_{1\le j\le n}\dot{\overline\zeta}_{j}\psi_{reg}(y_j)
-\sum\limits_{1\le j\le n}\dot\zeta_j\overline\psi_{reg}(y_j)\\
&=&\frac{d}{dt}\Big(-\Vert\nabla\psi_{reg}\Vert^2-m^2\Vert\psi_{reg}\Vert^2\Big)
-\sum\limits_{1\le j\le n}\dot{\overline\zeta}_{j}{\tilde F}_j(\zeta)
-\sum\limits_{1\le j\le n}\dot\zeta _j\overline {\tilde F}_j(\zeta)
+\sum\limits_{j\not =k}g_{kj}(\dot{\overline\zeta}_{j}\zeta_k+\dot\zeta_j\overline\zeta_k)\\
&=&\frac{d}{dt}\Big(-\Vert\nabla\psi_{reg}\Vert^2-m^2\Vert\psi_{reg}\Vert^2-\tilde U(\zeta)
+{\cal G}(\zeta)\Big).
\end{eqnarray*}
Then (\ref{cHFT}) follows.
\end{proof}
\begin{corollary}\label{cor1}
The following identity holds
\begin{equation}\label{UtU}
\tilde U(\zeta(t))=U(\zeta(t)), \quad t\in [0,\tau].
\end{equation}
\end{corollary}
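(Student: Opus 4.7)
The plan is to show that the trajectory $\zeta(t)$ stays inside the ball $\{|\zeta|\le\Lambda(\Psi_0)\}$ on $[0,\tau]$, so that (\ref{Lambda1}) gives the identity $\tilde U(\zeta(t))=U(\zeta(t))$. The mechanism is the combination of energy conservation (Lemma \ref{H-pr}) with the coercivity bound (\ref{Lambda2}).

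First I would verify that the initial point satisfies $|\zeta_0|\le\Lambda(\Psi_0)$. By definition of ${\cal H}_F$ and the assumption (\ref{bound-below}) on $U$,
\[
{\cal H}_F(\Psi_0)=\Vert\pi_0\Vert^2+\Vert\nabla\psi_{0,reg}\Vert^2+m^2\Vert\psi_{0,reg}\Vert^2+U(\zeta_0)-{\cal G}(\zeta_0)\ge b|\zeta_0|^2-a,
\]
so $|\zeta_0|^2\le ({\cal H}_F(\Psi_0)+a)/b=\Lambda(\Psi_0)^2$. Together with (\ref{Lambda1}) this gives $\tilde U(\zeta_0)=U(\zeta_0)$, and hence ${\cal H}_{\tilde F}(\Psi_0)={\cal H}_F(\Psi_0)$ since the remaining terms in the energy functional are independent of the choice of potential.

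Next, applying Lemma \ref{H-pr} and the modified coercivity (\ref{Lambda2}), for every $t\in [0,\tau]$
\[
b|\zeta(t)|^2-a\le \tilde U(\zeta(t))-{\cal G}(\zeta(t))\le {\cal H}_{\tilde F}(\Psi(t))={\cal H}_{\tilde F}(\Psi_0)={\cal H}_F(\Psi_0),
\]
where the second inequality uses that the kinetic and regular-part gradient/mass terms are nonnegative. This yields $|\zeta(t)|\le\Lambda(\Psi_0)$ throughout $[0,\tau]$, and (\ref{Lambda1}) then gives (\ref{UtU}).

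The only subtle step is the reduction ${\cal H}_{\tilde F}(\Psi_0)={\cal H}_F(\Psi_0)$ in the first paragraph; it is genuinely needed because $\Lambda(\Psi_0)$ was defined in terms of the original energy ${\cal H}_F$, not the modified ${\cal H}_{\tilde F}$. Once this is in place, the argument is a one-line application of energy conservation and the coercive lower bound, and nothing further is required.
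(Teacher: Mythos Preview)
Your proof is correct and follows essentially the same approach as the paper: first using (\ref{bound-below}) to get $|\zeta_0|\le\Lambda(\Psi_0)$ and hence ${\cal H}_{\tilde F}(\Psi_0)={\cal H}_F(\Psi_0)$, then combining energy conservation (Lemma \ref{H-pr}) with the coercivity (\ref{Lambda2}) to bound $|\zeta(t)|\le\Lambda(\Psi_0)$ and invoke (\ref{Lambda1}). The paper's argument is identical in structure and detail.
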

\begin{proof}
First note that 
\[
{\cal H}_{F}(\Psi_0)\ge  U(\zeta_{0})-{\cal G}(\zeta_{0})\ge b|\zeta_{0}|^2-a.
\]
Therefore, $|\zeta_0|\le\Lambda(\Psi_0)$ and then $\tilde U(\zeta_0)=U(\zeta_0)$, 
${\cal H}_{\tilde F}(\Psi_0)={\cal H}_{F}(\Psi_0)$.
Further, 
\[
{\cal H}_{\tilde F}(\Psi(t))\ge \tilde U(\zeta(t))-{\cal G}(\zeta(t))
\ge b|\zeta(t)|^2-a,\quad t\in [0,\tau].
\]
Hence (\ref{cHFT}) implies that
\begin{equation}\label{zeta_bound}
|\zeta(t)|\le \sqrt{({\cal H}_{\tilde F}(\Psi(t))+a)/b}=\sqrt{({\cal H}_{\tilde F}(\Psi_0)+a)/b}
=\sqrt{({\cal H}_{F}(\Psi_0)+a)/b}=\Lambda(\Psi_0),\quad t\in [0,\tau].
\end{equation}
\end{proof}
From the identity  (\ref{UtU}) it follows that we can replace $\tilde F$ by $F$ 
in  Proposition \ref{TLWP} and in Lemma \ref{H-pr}.

{\bf Proof of Theorem \ref{theorem-well-posedness}}. 
The solution $\Psi(t)=(\psi(t),\dot\psi(t))\in {\cal D}_{F}$ constructed in Proposition  \ref{TLWP}
exists for $0\le t\le\tau$, where the time span $\tau$ in Lemma \ref{LLWP} depends only on $\Lambda(\Psi_0)$.
Hence, the bound (\ref{zeta_bound}) at $t=\tau$ allows us to extend the solution $\Psi$ to the time
interval $[\tau, 2\tau]$. We proceed by induction to obtain the solution for all $t\ge 0$.

\end{document}